\documentclass[11pt]{amsart}
\usepackage{ae}
\usepackage[all]{xy}
\usepackage{graphicx,amsfonts,amssymb,amsmath}
\usepackage{amsmath}
\usepackage{pifont}

 \usepackage[backref]{hyperref}
\hypersetup{colorlinks,linkcolor=blue,citecolor=red,}
\usepackage[alphabetic,backrefs,lite]{amsrefs}



\usepackage[OT2,T1]{fontenc}
\DeclareSymbolFont{cyrletters}{OT2}{wncyr}{m}{n}
\DeclareMathSymbol{\sha}{\mathalpha}{cyrletters}{"58}
\input cyracc.def


\vfuzz2pt 

 \newtheorem{thm}{Theorem}[section]

 \newtheorem{cor}[thm]{Corollary}
 \newtheorem{lem}[thm]{Lemma}
 
 \newtheorem{prop}[thm]{Proposition}
 \theoremstyle{definition}
 \newtheorem{defn}[thm]{Definition}
 \theoremstyle{remark}
 
 \theoremstyle{remark}
 \newtheorem{rem}[thm]{Remark}


 \newcommand{\To}{\longrightarrow}

 \newcommand{\E}{\textup{E}}
 
 \newcommand{\im}{\textup{Im}}
 \renewcommand{\ker}{\textup{Ker}}
 
 \newcommand{\fab}{\textup{f-ab}}
  \newcommand{\et}{\textup{et}}
   \newcommand{\desc}{\textup{desc}}
 \newcommand{\Br}{\textup{Br}}
 \renewcommand{\H}{\textup{H}}

 \newcommand{\Hom}{\textup{Hom}}
 \newcommand{\Mor}{\textup{Mor}}

  \newcommand{\Spec}{\textup{Spec}}
 \newcommand{\BM}{Brauer\textendash Manin\ }

\newcommand{\nosp}{\negthinspace}

 \newcommand{\Ind}{\textup{Ind}}

 \newcommand{\A}{\textbf{A}}
 \newcommand{\Q}{\mathbb{Q}}
 \newcommand{\R}{\textup{R}}
  
 \newcommand{\Z}{\mathbb{Z}}

\numberwithin{equation}{section}



\begin{document}

\title[]
 {\'Etale Brauer-Manin obstruction for Weil restrictions}

\author{Yang Cao}
\author{Yongqi Liang}

\address{Yang CAO
\newline University of Scinece and Technology of China,
\newline School of Mathematical Sciences,
\newline 96 Jinzhai Road,
\newline  230026 Hefei, Anhui, China
 }

\email{yangcao1988@ustc.edu.cn}

\address{Yongqi LIANG
\newline University of Scinece and Technology of China,
\newline School of Mathematical Sciences,
\newline 96 Jinzhai Road,
\newline  230026 Hefei, Anhui, China
 }

\email{yqliang@ustc.edu.cn}

\keywords{Weil restriction, \'etale \BM obstruction}
\thanks{\textit{MSC 2020} : 14G12 11G35 14G05p}


\maketitle

\begin{abstract}
For any quasi-projective algebraic variety $X$ defined over a finite extension $L$ of a number field $K$, we prove that the \'etale Brauer\textendash Manin set of $X$ agrees with the \'etale Brauer\textendash Manin set of its Weil  restriction of scalars $\R_{L/K}X$ with respect to $L/K$.
\end{abstract}


\section{Introduction}

To study the set of rational points of an algebraic variety $X$ defined over a number field $L$, we embed it diagonally into the set of ad\'elic points $X(\A_L)$.
A most important tool is various cohomological obstructions applied to $X(\A_L)$ consisting of  two types in principle
\begin{itemize}
\item the \emph{\BM obstruction} defined  by Yu. I. Manin via the Brauer group $\Br(X)=\H^2(X,\mathbb{G}_\textup{m})$;
\item the \emph{descent obstruction} given by torsors under (not necessarily commutative nor connected) linear algebraic groups $G$ which relates to $\H^1(X,G)$;
\end{itemize}
and a mix of them. See \S \ref{recall} for a brief recall.

For an arbitrary subfield $K$ of $L$, when $X$ is  quasi-projective its \emph{Weil restriction} $\R_{L/K}X$ is the algebraic variety defined over $K$ such that for any  $K$-scheme $S$
$$\Mor_K(S,\R_{L/K}X)=\Mor_L(S_L,X),$$
where $S_L=S\times_{\Spec(K)}\Spec(L)$. In particular, we have a natural identification
$$\Phi: X(\A_L)\buildrel{=}\over\longrightarrow(\R_{L/K}X)(\A_K),$$
under which the subset $X(L)$ is identified with $\left(\R_{L/K}X\right)(K)$.

In \cite[Remark 5, page 95]{CTPoonen00}, J.-L. Colliot-Th\'el\`ene and B. Poonen ask whether the existences of \BM obstruction to Hasse principle on $X$ and on $\R_{L/K}X$ are equivalent to each other. We cannot help pushing it even further to ask whether surviving subsets of diverse cohomological obstructions are identified under $\Phi$. For the \emph{finite} descent obstruction (and similarly for the \emph{finite solvable} or \emph{finite abelian} descent obstructions), it has been affirmatively answered by M. Stoll in \cite[Proposition 5.15]{Stoll07}.

In the present paper, we prove the following main result.
\begin{thm}\label{mainthm}
Let $L/K$ be an extension of number fields. Let $X$ be a smooth quasi-projective geometrically connected variety defined over $L$. Denote by $X'=\R_{L/K}X$ its Weil restriction.

Then the \'etale \BM subsets of $X$ and of $X'$ coincide under the identification $\Phi: X(\A_L)\buildrel{=}\over\longrightarrow X'(\A_K)$, i.e.
\begin{equation}\label{maineq}
\Phi\left(X(\A_L)^{\et,\Br}\right)=X'(\A_K)^{\et,\Br}.
\end{equation}
\end{thm}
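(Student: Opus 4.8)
The plan is to prove the two inclusions of \eqref{maineq} separately, exploiting three elementary functorialities of Weil restriction: that it sends a torsor $f\colon Y\to X$ under a finite $L$-group scheme $G$ to a torsor $\R_{L/K}f\colon\R_{L/K}Y\to X'$ under $\R_{L/K}G$; that it is compatible with twisting, $\R_{L/K}(Y^{\sigma})=(\R_{L/K}Y)^{\tau}$; and that Shapiro's lemma identifies the sets of twists, $\H^1(L,G)\cong\H^1(K,\R_{L/K}G)$, $\sigma\leftrightarrow\tau$. Throughout I write $X'=\R_{L/K}X$, I let $\epsilon\colon X'_L\to X$ be the counit of the adjunction, and I use the base-change identity $\overline{X'}\cong\prod_{\tau\in\Hom_K(L,\overline K)}{}^{\tau}\overline X$. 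The basic numerical input is a projection formula: setting $\textup{cor}=\textup{Cor}_{X'_L/X'}\circ\epsilon^*\colon\Br(X)\to\Br(X')$, where $X'_L\to X'$ is finite \'etale of degree $[L:K]$, one checks place by place, using $\textup{inv}_v\circ\textup{Cor}_{L_w/K_v}=\textup{inv}_w$ and the compatibility of corestriction with the Cartesian square cut out by a local point, that
\[
\sum_v\textup{inv}_v\bigl((x'_v)^*\textup{cor}(A)\bigr)=\sum_w\textup{inv}_w\bigl(x_w^*A\bigr)
\]
for every $A\in\Br(X)$ and every $x\in X(\A_L)$ with image $x'=\Phi(x)$.

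For the inclusion $X'(\A_K)^{\et,\Br}\subseteq\Phi\bigl(X(\A_L)^{\et,\Br}\bigr)$, take $x'$ in the left-hand side, put $x=\Phi^{-1}(x')$, and fix a torsor $Y\to X$ over $L$. Applying the defining condition of the \'etale \BM set on $X'$ to $\R_{L/K}Y\to X'$ yields, through Shapiro, a twist $\sigma\in\H^1(L,G)$ together with a lift $t'\in\R_{L/K}(Y^{\sigma})(\A_K)^{\Br}$ of $x'$. Since $t'$ is orthogonal to $\textup{cor}\,\Br(Y^{\sigma})$, the projection formula applied to $Y^{\sigma}$ shows that the point $t\in Y^{\sigma}(\A_L)$ corresponding to $t'$ is orthogonal to $\Br(Y^{\sigma})$, i.e. $t\in Y^{\sigma}(\A_L)^{\Br}$, while $f^{\sigma}(t)=x$. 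As $Y$ was arbitrary, $x\in X(\A_L)^{\et,\Br}$. This direction uses only the orthogonality-preserving half of the projection formula and is clean.

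For the reverse inclusion $\Phi\bigl(X(\A_L)^{\et,\Br}\bigr)\subseteq X'(\A_K)^{\et,\Br}$, the first step is a cofinality (domination) lemma, in the spirit of Stoll's Proposition 5.15: every finite \'etale torsor $T\to X'$ under a finite $K$-group scheme is dominated by one of the form $\R_{L/K}Y\to X'$. This is proved on fundamental groups: since $\pi_1(\overline{X'})=\prod_\tau\pi_1({}^{\tau}\overline X)$ with $\Gal(\overline K/K)$ permuting the factors, every open subgroup contains a product $\prod_\tau N$ for a characteristic open $N\trianglelefteq\pi_1(\overline X)$, and $\prod_\tau N$ is Galois-stable and corresponds precisely to a Weil-restricted cover. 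By the functoriality of the \'etale \BM set under morphisms of torsors, domination lets me replace the intersection over all $T$ by the intersection over the Weil-restricted torsors $\R_{L/K}Y$. It then remains to show, for $x\in X(\A_L)^{\et,\Br}$ and each $Y\to X$, that the twist $\sigma$ furnished by survival of $x$ on $Y$ yields a lift $t'$ of $x'$ lying in $\R_{L/K}(Y^{\sigma})(\A_K)^{\Br}$.

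This last point is the main obstacle, and it is exactly the hard (surjectivity-type) half of \BM compatibility that the first inclusion avoided: I must show $t'\perp\Br(\R_{L/K}(Y^{\sigma}))$ knowing only that $t\perp\Br(Y^{\sigma})$, equivalently $t'\perp\textup{cor}\,\Br(Y^{\sigma})$. The algebraic part is under control: $\Pic\overline{X'}\cong\bigoplus_\tau\Pic({}^{\tau}\overline X)$ is an induced module, so Shapiro gives $\H^1(K,\Pic\overline{X'})\cong\H^1(L,\Pic\overline X)$ and the algebraic Brauer classes are accounted for by $\textup{cor}$ up to constants, which pair trivially by reciprocity. The genuine difficulty is transcendental: by K\"unneth, $\Br(\overline{X'})$ acquires cross-terms built from $\H^1_{\et}({}^{\tau}\overline X,\mu_n)$ of distinct factors that do not lie in the image of $\textup{cor}$ and could a priori obstruct $t'$. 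Here the \'etale structure is essential, and is precisely what distinguishes the \'etale from the plain \BM set under Weil restriction: each such cross-term becomes trivial after pulling back along a suitable further finite \'etale cover $Y'\to Y$ (the one killing the relevant $\mu_n$-torsors), whose Weil restriction $\R_{L/K}Y'\to X'$ refines $\R_{L/K}Y$; since $x$ lies in the \'etale \BM set of $X$ it survives on every such $Y'$, and this survival is what annihilates the cross-terms on the $X'$ side. Assembling the three contributions—constants by reciprocity, algebraic classes by Shapiro and $\textup{cor}$, transcendental cross-terms by further Weil-restricted covers—gives $t'\perp\Br(\R_{L/K}(Y^{\sigma}))$ and hence the reverse inclusion. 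I expect the technical heart of the argument to be the uniform control of these transcendental cross-terms and the verification that they are killed by Weil restrictions of covers coming from $X$.
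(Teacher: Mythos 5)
Your first inclusion, $X'(\A_K)^{\et,\Br}\subseteq\Phi\bigl(X(\A_L)^{\et,\Br}\bigr)$, is correct and agrees in substance with the paper's easy half (Proposition \ref{fdescentformula} and Corollary \ref{easyinclusion}); your projection-formula argument is the alternative route via compatibility of the Brauer--Manin pairing with corestriction that the paper also records. For the hard direction, however, there are two genuine gaps. The smaller one: your cofinality lemma is false as stated. A domination of $\overline{T}$ by $\overline{\R_{L/K}Y}$ over $\overline{K}$ only descends to a domination of some \emph{twist} ${}_\tau T$ over $K$ (Stoll's Lemma 5.6, Proposition \ref{morfinitetorsor} here), and one must first replace $\overline{T}$ by a connected component together with its stabilizer. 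This is repairable but cannot be omitted, since the twist $\tau$ propagates through the rest of the argument.

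The serious gap is the step ``survival on further covers annihilates the cross-terms,'' which is where the entire difficulty of the theorem sits and where your sketch is circular. To prove $t'\perp\Br(\R_{L/K}(Y^{\sigma}))$ you pull a cross-term back along $\R_{L/K}(Y'^{\sigma'})\to\R_{L/K}(Y^{\sigma})$ and invoke survival of $x$ on $Y'$. But (i) survival of $x$ on $Y'$ only yields \emph{some} Brauer-orthogonal ad\'elic lift on \emph{some} twist of $Y'$, with no guarantee that this lift maps to your chosen $t$ on $Y^{\sigma}$; producing a compatible lift is precisely the content of Cao's non-formal theorem $X(\A_L)^{\et,\Br}=\bigcup_{\sigma}{}_{\sigma}\nosp f\bigl({}_{\sigma}\nosp Y(\A_L)^{\et,\Br}\bigr)$ (equation (\ref{caoeq})), which you cannot avoid and do not invoke. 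And (ii) even granting a compatible lift, one needs the precise statement that \emph{every} class of $\H^2(\R_{L/K}(Y^{\sigma}),\mu_n)$ --- not only the visibly transcendental K\"unneth cross-terms over $\overline{K}$, but also whatever lives in the kernel of $\H^2(X')\to\H^2(\overline{X'})$ --- pulls back, on the Weil restriction of the universal $n$-torsion torsor, into $\textup{cor}(\H^2)+\im(\H^2(K))$. That requires the Leray spectral-sequence comparison (Shapiro's lemma for $\H^i(L,\H^1(\overline{X}))$, injectivity of $\H^3(k)\to\H^3(V)$ at ad\'elic points) together with the Skorobogatov--Zarhin/Cao structure of $\H^2$ of a product and the cup-product description of the cross-terms (Lemma \ref{vanish}, Proposition \ref{finalpropforunfair}). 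The paper packages all of this as the ``unfair comparison'' $\Phi\bigl(V(\A_L)^{\fab,\Br}\bigr)\subset(\R_{L/K}V)(\A_K)^{\Br}$, applied not to $X$ itself but to each member of the cofinal family, which is exactly what breaks the infinite regress your outline would otherwise enter (orthogonality on the cover is again the same problem one level up). You have identified the right enemy, but the proposal as written does not defeat it.
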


As a by-product, from the  inclusion $\supset$ of the equality (\ref{maineq}), which is the easy half, we deduce the following corollary.
\begin{cor}[Corollary \ref{extiongroundfield}]\label{maincor}
Let $L/K$ be an extension of number fields. Let $V$ be a smooth quasi-projective geometrically connected variety defined over $K$.
Then
$$V(\A_K)^{\et,\Br}\subset V_L(\A_L)^{\et,\Br},$$
in particular, the existence of the \'etale \BM obstruction to Hasse principle on $V_L$ implies the existence of the same obstruction on $V$.
\end{cor}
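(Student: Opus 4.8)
The plan is to specialize Theorem~\ref{mainthm} to $X=V_L$ and combine its easy inclusion with the covariant functoriality of the \'etale \BM set under the unit of the Weil-restriction adjunction. Since $V$ is smooth, quasi-projective and geometrically connected over $K$, its base change $V_L$ has the same properties over $L$, so Theorem~\ref{mainthm} applies to $X=V_L$, giving $X'=\R_{L/K}(V_L)$. I only use the inclusion $\supset$ of \eqref{maineq}, namely
$$
(\R_{L/K}V_L)(\A_K)^{\et,\Br}\subset\Phi\big(V_L(\A_L)^{\et,\Br}\big).
$$

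Next I recall the unit of the adjunction whose right adjoint is $\R_{L/K}$ and whose left adjoint is base change $(-)_L$: it provides a natural $K$-morphism $\iota\colon V\to\R_{L/K}(V_L)$, the one corresponding to $\mathrm{id}_{V_L}$ under the defining isomorphism $\Mor_K(S,\R_{L/K}V_L)=\Mor_L(S_L,V_L)$. Evaluating at $S=\Spec\A_K$ and using $\A_K\otimes_K L=\A_L$, i.e. $(\Spec\A_K)_L=\Spec\A_L$, naturality of the adjunction identifies the induced map $\iota_*\colon V(\A_K)\to(\R_{L/K}V_L)(\A_K)$ with $\Phi$ composed with the base-change map $\lambda\colon V(\A_K)\to V_L(\A_L)$; concretely $\Phi^{-1}\circ\iota_*=\lambda$, where $\lambda$ is induced by the inclusion $\A_K\hookrightarrow\A_L$ together with $V_L(\A_L)=V(\A_L)$.

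Then I invoke the covariant functoriality of the \'etale \BM set: for any morphism $g\colon Y\to Z$ of smooth varieties, pulling back finite torsors and Brauer classes along $g$ shows $g\big(Y(\A)^{\et,\Br}\big)\subset Z(\A)^{\et,\Br}$, since a lift of a point of $Y(\A)^{\et,\Br}$ to a suitable twisted torsor over $Y$ maps to the corresponding twisted torsor over $Z$, and the $\Br$-orthogonality is preserved because Brauer classes pull back. Applying this to $g=\iota$ gives $\iota_*\big(V(\A_K)^{\et,\Br}\big)\subset(\R_{L/K}V_L)(\A_K)^{\et,\Br}$. Chaining the three inclusions, any $P\in V(\A_K)^{\et,\Br}$ satisfies $\iota_*P\in\Phi\big(V_L(\A_L)^{\et,\Br}\big)$, hence $\lambda(P)=\Phi^{-1}(\iota_*P)\in V_L(\A_L)^{\et,\Br}$. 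This is exactly the asserted inclusion $V(\A_K)^{\et,\Br}\subset V_L(\A_L)^{\et,\Br}$, read through $\lambda$; and if $V_L(\A_L)^{\et,\Br}=\emptyset$ then $V(\A_K)^{\et,\Br}=\emptyset$, which yields the statement on the Hasse principle.

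The main obstacle is careful bookkeeping rather than a new idea: I must check that the composite $\Phi^{-1}\circ\iota_*$ really is the base-change map $\lambda$, which comes down to tracing the unit of the adjunction through the identification $(\Spec\A_K)_L=\Spec\A_L$, and I must confirm that covariant functoriality of the \'etale \BM construction holds for the morphism $\iota$, which in general is neither finite nor flat. Both points are standard, and once they are in place the corollary drops out immediately from the easy half of Theorem~\ref{mainthm}.
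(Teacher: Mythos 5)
Your proposal is correct and follows essentially the same route as the paper: both use the unit morphism $V\to\R_{L/K}(V_L)$, identify $\Phi^{-1}$ composed with the induced map on ad\'elic points with the natural inclusion $V(\A_K)\to V_L(\A_L)$, and then combine the covariant functoriality of the \'etale \BM set with the easy inclusion $\supset$ of \eqref{maineq} (Corollary \ref{easyinclusion}) applied to $X=V_L$. The only difference is that you spell out the adjunction bookkeeping and the functoriality argument in more detail than the paper does.
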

The analogous statement of Corollary \ref{maincor} for the \BM obstruction
holds as well and it can  be deduced from the following inclusion of \BM subsets (see Corollary \ref{easyinclusion} for the proof of this inclusion)
\begin{equation}\label{reverseBM}
\Phi\left(X(\A_L)^{\Br}\right)\supset X'(\A_K)^{\Br}
\end{equation}
in place of (\ref{maineq}).
An alternative approach of this analogy was suggested by O. Wittenberg and presented by B. Creutz and B. Viray in \cite[Lemma 2.1(1)]{CreutzViray21}.

However, the other inclusion for \BM sets
\begin{equation}\label{eqBr}
\Phi\left(X(\A_L)^{\Br}\right)\subset(\R_{L/K}X)(\A_K)^{\Br}
\end{equation}
remains open.

The paper is organized as follows. First of all, we recall definitions and several known facts about cohomological obstructions in \S \ref{recall}.
Secondly, in \S \ref{easyhalf} we establish (\ref{reverseBM}) and the easy inclusion $\supset$ of the equality (\ref{maineq}), and we deduce Corollary \ref{maincor} and its analogy as well. The proof of the other inclusion $\subset$ of (\ref{maineq}) is then divided into two parts, the first focuses on abelian cohomologies and the second concerns non-abelian cohomologies. In the first part \S \ref{unfairsection}, we compare the Brauer groups of $X$ and $\R_{L/K}X$ to prove an ``unfair'' intermediate comparison (\ref{unfaireq}) which is a weaker version of the difficult inclusion of (\ref{maineq}). In the second part \S \ref{mainproofsection}, we make use of the theory of finite torsors developed by M. Stoll to complete the proof of Theorem \ref{mainthm}.

Note that the key intermediate comparison (\ref{unfaireq}) is weaker than (\ref{eqBr}), it does not allow us to prove (\ref{eqBr}).
When we pass ``limit''  to the ``universal cover'', it turns out to be the inclusion $\subset$ of the equality (\ref{maineq}).

\section{Definitions and known facts about cohomological obstructions}\label{recall}

\subsection{Notation and conventions}\label{notationsection}
In this paper, the ground field $k$ is a number field, for which we denote by $\A_k$ its ring of ad\`eles.
Let $\Omega_k$ denote the set of places of $k$.
The absolute Galois group of $k$ is denoted by $\Gamma_k$.
A $k$-variety $V$ means a separated scheme of finite type over $k$. For any field extension $k'$ of $k$, denote by $V_{k'}$ the base extension of $V$ to $k'$. Fix an algebraic closure $\overline{k}$ of $k$, the variety $V_{\overline{k}}$ is also denoted by $\overline{V}$.

Cohomology groups in the paper are \'etale cohomology or Galois cohomology (not necessarily commutative). The Brauer group $\Br(V)$ is the cohomological Brauer group $\H^2(V,\mathbb{G}_\textup{m})$. The structure morphism $V\to\Spec(k)$ induces a natural homomorphism $\H^i(k,\mathcal{F})\to\H^i(V,\mathcal{F})$ for \'etale cohomologies with a fixed sheaf $\mathcal{F}$ (usually $\mu_n$ or $\mathbb{G}_\textup{m}$ in this paper), we will denote their images uniformly by $\im(\H^i(k,-))$ for different varieties $V$ since they fit into any naturally induced commutative diagrams, though these images could be different as abstract groups when $V$ varies.

In this paper, when we state general assertions, we will use the notation above (e.g. $V$, $k$, etc.). While when we  specify to our situation concerning Weil restrictions, we will turn to the notation below (e.g. $X$, $X'$, $L$, $K$, etc.).

Let $L/K$ be an extension of number fields. From now on, we fix an algebraic closure $\overline{L}$ of $L$ (and $K$).
For a quasi-projective $L$-variety $X$, its Weil restriction $\R_{L/K}X$ exists according to \cite[Theorem 4 \S 7.6]{Neronmodel}. It will be denoted by $X'=\R_{L/K}X$. In the whole paper, the $L$-variety $X$ is assumed to be smooth and geometrically connected, then $X'$ is a smooth geometrically connected quasi-projective $K$-variety.

By Weil restriction, the identity $\textup{id}:X'\to X'$ induces a morphism $X'_L\to X$ giving rise to a homomorphism $\H^i(X,\mathcal{F})\to\H^i(X'_L,\mathcal{F})$ for the abelian sheaf $\mathcal{F}=\mu_n$ or $\mathbb{G}_\textup{m}$. By composing with the corestriction homomorphism $\textup{Cores}_{L/K}:\H^i(X'_L,\mathcal{F})\to\H^i(X',\mathcal{F})$, we obtain a homomorphism $\phi:\H^i(X,\mathcal{F})\to\H^i(X',\mathcal{F})$.
Similar constructions also exist between each term that appears in cohomological  spectral sequences of $X$ and $X'$. We always use $\phi$ (with a subscript if necessary) to denote these natural homomorphisms from cohomologies of $X$ to cohomologies of $X'$.


\subsection{Torsors}\label{torsorrecall}
We refer to the book \cite{Skbook}  by A. Skorobogatov for more details on torsors. We recall some facts that will be used in this paper.

Let $H$ be a linear algebraic group over $k$. Let $h:W\to V$ be a $V$-torsor under $H$ (with group action on the right), also denoted by $h:W\buildrel{H}\over\to V$. Up to an isomorphism of torsors, it represents a unique class $[W]$ in the cohomology set (or cohomology group if $H$ is commutative) $\H^1(V,H)$.

For a class of a $1$-cocycle $[\sigma]\in\H^1(k,H)$, we obtain a twisted $V$-torsor ${_\sigma\nosp h}:{_\sigma\nosp W}\to V$ under the twisted $k$-group $_\sigma\nosp H$. This gives rise to a bijection between the pointed sets $\H^1(V,H)$ and $\H^1(V,{_\sigma\nosp H})$ mapping $[W]$ to $[{_\sigma\nosp W}]$.

We observe that the torsor $W$ becomes trivial when being base extended by $W\to V$ since the diagonal is a section of $W\times_V W\to W$. In other words, the class $[W]$ maps to the trivial class in $\H^1(W,H)$.

Let us turn to Weil restrictions.
Let $G$ be a linear algebraic group over $L$ and $f:Y\to X$ be a torsor under $G$. Consider the linear $K$-group $G'=\R_{L/K}G$
and the $K$-variety $Y'=\R_{L/K}Y$. According to \cite[Corollary A.5.4(3)]{pseudoreductivegroups}, the induced morphism $f':Y'\to X'$ is a $X'$-torsor under $G'$.

\subsection{Descent obstruction}\label{descentobsrecall}
For a $V$-torsor $h:W\buildrel{H}\over\to V$ under a linear algebraic group $H$ over $k$, the subset $V(\A_k)^h$ of ad\'elic points that survive the torsor is defined to be
$$V(\A_k)^h=\bigcup_{[\sigma]\in\H^1(k,H)}{_\sigma\nosp h}({_\sigma\nosp W}(\A_k)).$$
The descent subset is defined to be
$$V(\A_k)^\desc=\bigcap_{h:W\buildrel{H}\over\to V}V(\A_k)^h,$$
where the intersection is taken over all $V$-torsors $h:W\buildrel{H}\over\to V$ under all linear algebraic groups $H$ over $k$.
In the same way, we define respectively $V(\A_k)^\textup{conn}$, $V(\A_k)^\et$, and $V(\A_k)^\fab$, by considering respectively connected linear algebraic groups, finite algebraic groups, and finite abelian algebraic groups.

\subsection{Brauer\textendash Manin obstruction}\label{BMobsrecall}
In \cite{Manin}, Yu. I. Manin defined a pairing for a $k$-variety $V$
\begin{equation}\label{BMpairing}\left(\quad,\quad\right)_{\textup{BM}_k}:~V(\A_k)\times\Br(V)\to\Q/\Z\end{equation}
under which  the diagonally embedded set of rational points $V(k)\subset V(\A_k)$ vanishes and the image of $\Br(k)\to\Br(V)$ does not affect. The \BM subset of ad\'elic points annihilated by all elements of $\Br(V)$ is denoted by $V(\A_k)^\Br$.

Consider the $L$-variety $X$ and its Weil restriction $X'$ over $K$.
As argued in \cite[Lemma 2.1(1)]{CreutzViray21}, because  the corestriction map is compatible with pullback \cite[Proposition 3.8.1]{Brauergroup}, we obtain the following compatibility of the \BM pairing
$$\left((x_w)_{w\in\Omega_L},b\right)_{\textup{BM}_L}=\left(\Phi((x_w)_{w\in\Omega_L}),\phi_\Br(b)\right)_{\textup{BM}_K}$$
for any $b\in\Br(X)$ and any $(x_w)_{w\in\Omega_L}\in X(\A_L)$.

\subsection{\'Etale \BM obstruction}\label{etBMobsrecall}
The \BM pairing applied to \'etale covers of the $k$-variety $V$ defines  the \'etale \BM subset
$$V(\A_k)^{\et,\Br}=\bigcap_{h:W\buildrel{H}\over\to V}V(\A_k)^{h,\Br}$$
with
$$V(\A_k)^{h,\Br}=\bigcup_{[\sigma]\in\H^1(k,H)}{_\sigma\nosp h}({_\sigma\nosp W}(\A_k)^\Br),$$
where the intersection is taken over all $V$-torsors $h:W\buildrel{H}\over\to V$ under all \emph{finite} $k$-groups $H$.
When we restrict ourselves to torsors under \emph{finite abelian} $k$-groups $H$, we define similarly the finite abelian \BM subset $V(\A_k)^{\fab,\Br}$.

\subsection{Comparisons}\label{comparisonrecall}
Some obstructions recalled above are comparable.
When the $k$-variety $V$ is  smooth and geometrically connected, the work of D. Harari \cite[Th\'eor\`eme 2 and Remarque 4]{Harari02} showed that $$V(\textbf{A}_k)^{\textup{conn}}=V(\textbf{A}_k)^{\Br}.$$ The work of C. Demarche \cite{Demarche09}, A. Skorobogatov \cite{Sk09etBMdesc}, generalised to quasi-projective varieties by the first  author, C. Demarche, and F. Xu \cite[Theorem 7.5]{CDX}, showed that $$V(\A_k)^\desc=V(\A_k)^{\et,\Br}.$$

\section{The easy inclusion and an application}\label{easyhalf}

From now on, we fix  an  extension of number fields $L/K$. We consider  a smooth  geometrically connected quasi-projective variety $X$ over $L$ and its Weil restriction $X'=\R_{L/K}X$ over $K$.

\begin{prop}\label{fdescentformula}
Let $G$ be a linear algebraic group over $L$. Consider a torsor $f:Y\buildrel{G}\over\to X$ under $G$ over $L$ and its Weil restriction $f':Y'\buildrel{G'}\over\to X'$ over $K$.

Under the identification $\Phi:X(\A_L)\to X'(\A_K)$  we have an equality
$$\Phi\left(X(\textbf{A}_L)^f\right)=X'(\textbf{A}_K)^{f'}.$$
\end{prop}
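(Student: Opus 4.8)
The plan is to decompose both sides of the asserted equality according to their defining unions over twists and to match them term by term using the functoriality of Weil restriction. Recall that
$$X(\A_L)^f=\bigcup_{[\sigma]\in\H^1(L,G)}{_\sigma\nosp f}\big({_\sigma\nosp Y}(\A_L)\big)\quad\text{and}\quad X'(\A_K)^{f'}=\bigcup_{[\tau]\in\H^1(K,G')}{_\tau\nosp f'}\big({_\tau\nosp Y'}(\A_K)\big).$$
First I would identify the two index sets. Since $G'(\overline{K})=G(\overline{K}\otimes_K L)=\Ind_{\Gamma_L}^{\Gamma_K}G(\overline{K})$ as $\Gamma_K$-groups, Shapiro's lemma (valid for pointed $\H^1$ in the non-abelian setting) furnishes a natural bijection $\H^1(L,G)\xrightarrow{\sim}\H^1(K,G')$. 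Concretely this bijection is realized by the Weil restriction functor itself: applying $\R_{L/K}$ to a $\Spec(L)$-torsor under $G$ produces a $\Spec(K)$-torsor under $G'$, and I would check that this coincides with the Shapiro bijection, so that the classes indexing the two unions are in canonical correspondence $[\sigma]\leftrightarrow[\tau]$.

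The key compatibility to establish is that Weil restriction commutes with twisting: for corresponding classes $[\sigma]\leftrightarrow[\tau]$ one has $\R_{L/K}({_\sigma\nosp G})={_\tau\nosp G'}$ and $\R_{L/K}({_\sigma\nosp Y})={_\tau\nosp Y'}$ as $X'$-torsors under ${_\tau\nosp G'}$, with $\R_{L/K}({_\sigma\nosp f})={_\tau\nosp f'}$. This follows from the functoriality of $\R_{L/K}$ together with the fact that the twisting construction is a contracted product built out of base change and descent operations that $\R_{L/K}$ respects; the cocycle correspondence entering here is exactly the one recorded by Shapiro above.

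Granting this, I would finish using the naturality of the adelic identification. For any quasi-projective $L$-variety $Z$ the adjunction $\Mor_K(S,\R_{L/K}Z)=\Mor_L(S_L,Z)$, applied to $S=\Spec(\A_K)$ together with $\A_K\otimes_K L=\A_L$, gives a functorial identification $Z(\A_L)=(\R_{L/K}Z)(\A_K)$ compatible with morphisms. Applying it to $Z={_\sigma\nosp Y}$ and to the projection ${_\sigma\nosp f}$ yields a commutative square in which the torsor-level identification ${_\sigma\nosp Y}(\A_L)={_\tau\nosp Y'}(\A_K)$ sits over $\Phi$ on the base via the vertical maps ${_\sigma\nosp f}$ and ${_\tau\nosp f'}$. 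Hence $\Phi\big({_\sigma\nosp f}({_\sigma\nosp Y}(\A_L))\big)={_\tau\nosp f'}({_\tau\nosp Y'}(\A_K))$ for each matched pair, and taking the union over the bijective index sets $\H^1(L,G)\cong\H^1(K,G')$ delivers the claimed equality.

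The main obstacle I expect is the twisting-versus-Weil-restriction compatibility of the second paragraph: one must verify carefully that $\R_{L/K}$ sends the $\sigma$-twist of the pair $(Y,G)$ to the $\tau$-twist of $(Y',G')$ with matching torsor structure, and that the cocycle bijection used there is literally the Shapiro bijection indexing the two unions. Once this bookkeeping is pinned down, the remaining steps are formal consequences of functoriality and the adjunction defining $\Phi$.
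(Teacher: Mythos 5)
Your proposal is correct and follows essentially the same route as the paper: identify $\H^1(L,G)\cong\H^1(K,G')$ via Shapiro's lemma realized by Weil restriction of torsors, check that $\R_{L/K}$ commutes with twisting so that $\R_{L/K}({_\sigma\nosp Y})$ is the corresponding twist of $Y'$, and then match the unions term by term using the adelic identification $\Phi$. The paper simply asserts the twisting compatibility ("is nothing but") where you flag it as the point requiring verification, but the argument is the same.
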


\begin{proof}
By Shapiro's lemma \cite[\S I.5.8.b)]{SerreGaloisCoh}, we identify the pointed sets $\H^1(L,G)$ and $\H^1(K,G')$ by mapping a class of $L$-torsor $[P]$ to the class of $K$-torsor $[\R_{L/K}P]$, being both denoted by $[\sigma]$ for a certain $1$-cocycle $\sigma$. The Weil restriction of the twisted torsor ${_\sigma\nosp f}:{_\sigma\nosp Y}\to X$ under $_\sigma\nosp G$ is nothing but the torsor ${_\sigma\nosp f}':{_\sigma\nosp Y}'\to X'$ under $_\sigma\nosp G'$. Whence $\Phi({_\sigma\nosp Y}(\A_L))={_\sigma\nosp Y}'(\A_K)$ and we obtain
\begin{equation*}
    \begin{array}{ll}
        \displaystyle\Phi\left(X(\textbf{A}_L)^f\right) &\displaystyle=\Phi\left(\bigcup_{[\sigma]\in\H^1(L,G)} {_{\sigma}\nosp f}({_{\sigma}\nosp Y}(\textbf{A}_L))\right)\\
        &\\
                  &\displaystyle = \bigcup_{[\sigma]\in\H^1(K,G')}{_{\sigma}\nosp f}'({_{\sigma}\nosp Y}'(\textbf{A}_K))\\
        &\\
                  &\displaystyle =X'(\textbf{A}_K)^{f'}.
    \end{array}
\end{equation*}
\end{proof}

\begin{cor}\label{easyinclusion}
Let $L/K$ be an  extension of number fields and let $X$ be a smooth  geometrically connected quasi-projective variety over $L$. Denote by $X'$ its Weil restriction over $K$.

Then under the identification $\Phi:X(\A_L)\to X'(\A_K)$  we have the following inclusions:
\begin{itemize}
\item[] $ \Phi\left(X(\textbf{A}_L)^{\textup{desc}} \right)\supset(\textup{R}_{L/K}X)(\textbf{A}_K)^{\textup{desc}},$
\item[]
 $\Phi\left(X(\textbf{A}_L)^{\textup{conn}}\right)\supset(\textup{R}_{L/K}X)({\textbf{A}}_K)^{\textup{conn}},$
\end{itemize}
or equivalently
\begin{itemize}
\item[] $ \Phi\left(X({\textbf{A}}_L)^{\textup{et},\Br}\right)\supset(\textup{R}_{L/K}X)({\textbf{A}}_K)^{\textup{et},\Br},$
\item[] $\Phi\left( X({\textbf{A}}_L)^{\Br}\right)\supset(\textup{R}_{L/K}X)({\textbf{A}}_K)^{\Br}.$
\end{itemize}
\end{cor}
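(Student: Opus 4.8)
The plan is to reduce Corollary~\ref{easyinclusion} to a single application of Proposition~\ref{fdescentformula} combined with the comparison results recalled in \S\ref{comparisonrecall}. First I would prove the two inclusions stated in terms of the descent and connected-descent subsets, since these are the geometric statements to which Proposition~\ref{fdescentformula} applies directly; the reformulation in terms of \'etale \BM and \BM subsets will then follow formally. Recall that $X(\A_K)^\desc$ (resp.\ $X(\A_K)^\textup{conn}$) is defined as an \emph{intersection} of surviving subsets $X(\A_L)^f$ over all $X$-torsors $f$ under linear (resp.\ connected linear) algebraic $L$-groups. The key observation is that Weil restriction $\R_{L/K}$ sends $X$-torsors to $X'$-torsors, as recalled in \S\ref{torsorrecall}, but it does \emph{not} produce every $X'$-torsor this way --- the torsors $f'=\R_{L/K}f$ form only a sub-family of all $X'$-torsors. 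This asymmetry is exactly what gives an inclusion rather than an equality, and it dictates in which direction the inclusion goes.

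The argument itself is short. Fix a point $(x'_v)\in X'(\A_K)^\desc$ and let $\Phi^{-1}$ carry it to a point of $X(\A_L)$; I must show this preimage lies in $X(\A_L)^f$ for every $X$-torsor $f:Y\xrightarrow{G}X$ under a linear algebraic $L$-group $G$. Given such an $f$, form its Weil restriction $f':Y'\xrightarrow{G'}X'$, which is a genuine $X'$-torsor under the linear $K$-group $G'=\R_{L/K}G$. Since $(x'_v)$ survives \emph{all} $X'$-torsors, in particular it survives $f'$, so $(x'_v)\in X'(\A_K)^{f'}$. Proposition~\ref{fdescentformula} then gives the equality $\Phi\big(X(\A_L)^f\big)=X'(\A_K)^{f'}$, whence $\Phi^{-1}(x'_v)\in X(\A_L)^f$. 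As $f$ was arbitrary among linear torsors, $\Phi^{-1}(x'_v)\in X(\A_L)^\desc$, proving the first inclusion $\Phi\big(X(\A_L)^\desc\big)\supset X'(\A_K)^\desc$. The connected version is identical, once one checks that $G$ connected forces $G'=\R_{L/K}G$ to be connected as well, so that $f'$ belongs to the sub-family of connected torsors over $X'$ that is being intersected on the right-hand side.

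The equivalent reformulations are then immediate from the comparison theorems of \S\ref{comparisonrecall}: Harari's equality $V(\A_k)^\textup{conn}=V(\A_k)^\Br$ applied over both $L$ and $K$ turns the connected-descent inclusion into the \BM inclusion $\Phi\big(X(\A_L)^\Br\big)\supset X'(\A_K)^\Br$, and the Demarche--Skorobogatov--Cao--Demarche--Xu equality $V(\A_k)^\desc=V(\A_k)^{\et,\Br}$ turns the full descent inclusion into $\Phi\big(X(\A_L)^{\et,\Br}\big)\supset X'(\A_K)^{\et,\Br}$. Both comparison theorems require $X$ and $X'$ to be smooth and geometrically connected, which holds by the standing hypotheses recorded in \S\ref{notationsection}.

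I do not expect a serious obstacle here: the content is entirely front-loaded into Proposition~\ref{fdescentformula} and the functoriality of Weil restriction on torsors. The only point demanding a line of care is the stability of the relevant class of structure groups under $\R_{L/K}$ --- namely that connectedness (and finiteness, for the later main theorem) is preserved --- together with the trivial but essential remark that $\R_{L/K}$ lands us inside, not onto, the family of $X'$-torsors, which is precisely why one obtains the inclusion $\supset$ and cannot hope for equality by this elementary argument alone.
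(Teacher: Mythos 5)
Your proposal is correct and follows essentially the same route as the paper: both reduce the statement to Proposition~\ref{fdescentformula} applied to the sub-family of $X'$-torsors of the form $\R_{L/K}f$ (the paper phrases this as an intersection of sets, you phrase it pointwise, which is the same argument), note that connectedness is preserved by $\R_{L/K}$ for the second inclusion, and invoke the comparison theorems of \S\ref{comparisonrecall} for the reformulations. No gaps.
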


\begin{proof}
Let the notation $f:Y\buildrel{G}\over\to X$ and $f':Y'\buildrel{G'}\over\to X'$ be as above.

By taking intersection over all torsors $f:Y\to X$  under all linear algebraic groups $G$ over $L$,  the formula in Proposition \ref{fdescentformula} implies that
$$\Phi\left(X({\textbf{A}}_L)^{\textup{desc}}\right)=\bigcap_{f:Y\to X} X'({\textbf{A}}_K)^{f'}\supset X'({\textbf{A}}_K)^{\textup{desc}}$$ as a subset of $X'({\textbf{A}}_K)$.
(Since not all linear algebraic groups over $K$ are of the form $\R_{L/K}G$ for a certain $G$, the converse inclusion is not clear at this moment.)

When $G$ is connected, the group $G'=\textup{R}_{L/K}G$ is also connected, cf. \cite[Proposition A.5.9]{pseudoreductivegroups}. It follows from  the intersection taken only for connected linear algebraic groups that $$\Phi\left(X({\textbf{A}}_L)^{\textup{conn}}\right)\supset X'({\textbf{A}}_K)^{\textup{conn}}$$ as well.

The last two assertions follow from the comparisons of obstructions: \S \ref{comparisonrecall} applied to both $X$ and $X'$.
\end{proof}

\begin{rem}
For the \'etale \BM subset, the converse inclusion is also valid but  the proof is much more sophisticated involving the rest of the paper.
\end{rem}

\begin{rem}
The last inclusion (for the \BM obstruction) can also be deduced from the compatibility (\ref{BMpairing}) of the \BM pairing recalled in \S \ref{BMobsrecall}. This alternative argument dates back to Colliot-Th\'el\`ene and Poonen \cite[Remark 5, page 95]{CTPoonen00}.
\end{rem}

As an immediate consequence, we have the following application to the behavior of the \'etale \BM obstruction under extension of the ground field.
An another proof of the second statement was suggested by O. Wittenberg and presented recently by B. Creutz and B. Viray in \cite[Lemma 2.1(1)]{CreutzViray21}.

\begin{cor}\label{extiongroundfield}
Let $L/K$ be an extension of number fields. Let $V$ be a smooth quasi-projective geometrically integral variety defined over $K$. Then
\begin{itemize}
\item[(1)] $V({\textbf{A}}_K)^{\textup{et},\Br}\subset V_L({\textbf{A}}_L)^{\textup{et},\Br}$,
\item[(2)] $V({\textbf{A}}_K)^{\Br}\subset V_L({\textbf{A}}_L)^{\Br}$.
\end{itemize}
\end{cor}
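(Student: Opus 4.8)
The plan is to derive both inclusions from the easy half already established in Corollary \ref{easyinclusion}, applied not to $V$ itself but to its base change. Concretely, I would set $X=V_L$, which is again smooth, quasi-projective and geometrically connected over $L$, so that $X'=\R_{L/K}X=\R_{L/K}(V_L)$ and Corollary \ref{easyinclusion} yields
\begin{equation*}
\R_{L/K}(V_L)(\A_K)^{\et,\Br}\subset\Phi\left(V_L(\A_L)^{\et,\Br}\right),\qquad \R_{L/K}(V_L)(\A_K)^{\Br}\subset\Phi\left(V_L(\A_L)^{\Br}\right).
\end{equation*}
It then remains to connect the obstruction sets of $V$ over $K$ with those of $\R_{L/K}(V_L)$ over $K$.

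The bridge is the unit of the adjunction between base change and Weil restriction. Since $\R_{L/K}$ is right adjoint to $(-)_L$, the identity $V_L\to V_L$ corresponds under $\Mor_K(S,\R_{L/K}(V_L))=\Mor_L(S_L,V_L)$, taken at $S=V$, to a canonical morphism of $K$-varieties $\eta:V\to\R_{L/K}(V_L)=X'$. I would then invoke the functoriality of the two obstruction sets under a $K$-morphism $g:V\to W$: from compatibility of the Brauer pairing with pullback, $\left(g(x),b\right)_{\textup{BM}_K}=\left(x,g^*b\right)_{\textup{BM}_K}$, one gets $g(V(\A_K)^{\Br})\subset W(\A_K)^{\Br}$, and pulling back finite torsors together with the commutation of twisting with pullback upgrades this to $g(V(\A_K)^{\et,\Br})\subset W(\A_K)^{\et,\Br}$. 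Applying this to $\eta$ gives
\begin{equation*}
\eta\left(V(\A_K)^{\et,\Br}\right)\subset\R_{L/K}(V_L)(\A_K)^{\et,\Br},\qquad \eta\left(V(\A_K)^{\Br}\right)\subset\R_{L/K}(V_L)(\A_K)^{\Br}.
\end{equation*}

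Chaining the two displays and applying $\Phi^{-1}$ sends $V(\A_K)^{\et,\Br}$ (resp. $V(\A_K)^{\Br}$) into $V_L(\A_L)^{\et,\Br}$ (resp. $V_L(\A_L)^{\Br}$) via the composite $\Phi^{-1}\circ\eta$, so the only remaining point is to check that this composite is exactly the natural base-change map $V(\A_K)\to V_L(\A_L)$ under which the inclusions in the statement are read. This I expect to be the main, though routine, obstacle: working place by place, $\R_{L/K}(V_L)$ base-changes to $\prod_{w\mid v}\R_{L_w/K_v}(V_{L_w})$ over each $K_v$, so that $X'(K_v)=\prod_{w\mid v}V(L_w)$, and $\Phi$ is precisely the regrouping of the factors $V(L_w)$ that defines $V_L(\A_L)$; under these identifications $\eta$ induces on $K_v$-points the diagonal of the base-change maps $V(K_v)\to V(L_w)$, whence $\Phi^{-1}\circ\eta$ is indeed base change. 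Carrying out this verification carefully establishes both (1) and (2).
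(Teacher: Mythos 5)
Your proposal is correct and follows essentially the same route as the paper: the paper likewise applies Corollary \ref{easyinclusion} to $X=V_L$, uses the adjunction morphism $V\to\R_{L/K}(V_L)$ together with functoriality of the obstruction sets, and identifies $\Phi^{-1}$ composed with the induced map on ad\'elic points with the natural inclusion $V(\A_K)\to V_L(\A_L)$. Your write-up merely spells out the place-by-place verification of that last identification, which the paper asserts without detail.
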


\begin{proof}
The identity $V_L\to V_L$ induces a $K$-morphism $V\to \textup{R}_{L/K}(V_L)$. The composition of the corresponding map $V({\textbf{A}}_K)\to (\textup{R}_{L/K}(V_L))({\textbf{A}}_K)$  with $\Phi^{-1}:(\R_{L/K}(V_L))(\A_K)\to V_L(\A_L)$ is nothing but the natural inclusion $V(\A_K)\to V_L(\A_L)$. The desired inclusions then follow from Theorem \ref{easyinclusion} applied to the $L$-variety $X=V_L$ and the functoriality of obstructions applied to $V\to \textup{R}_{L/K}(V_L)$.
\end{proof}

\section{An unfair comparison}\label{unfairsection}

In this section, we prove the following comparison which looks unfair: a certain kind of \'etale \BM obstruction of $X$ and the \BM obstruction of $X'$ are compared. However, it turns out to be the crucial intermediate step which leads to our main result.

\begin{prop}\label{unfairprop}
Let $L/K$ be an extension of number fields. Let $X$ be a smooth quasi-projective geometrically connected variety defined over $L$. Denote by $X'$ its Weil restriction over $K$.

Then under the identification $\Phi: X(\A_L)\longrightarrow X'(\A_K)$ we have an inclusion
\begin{equation}\label{unfaireq}
\Phi\left(X(\A_L)^{\fab,\Br}\right)\subset X'(\A_K)^{\Br}.
\end{equation}
\end{prop}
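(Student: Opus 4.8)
The plan is to reduce the statement to a comparison of Brauer classes under the map $\phi_{\Br}:\Br(X)\to\Br(X')$, combined with the finite abelian descent structure on the left-hand side. The key observation is that the \BM compatibility recalled in \S\ref{BMobsrecall} tells us that for $(x_w)\in X(\A_L)$ and $b\in\Br(X)$ we have $\left((x_w),b\right)_{\textup{BM}_L}=\left(\Phi((x_w)),\phi_{\Br}(b)\right)_{\textup{BM}_K}$. So if a point $(x_w)$ pairs trivially with every class in the image of $\phi_{\Br}$, then $\Phi((x_w))$ already kills that part of $\Br(X')$; the whole difficulty is that $\phi_{\Br}$ need not be surjective, so we must find, for an arbitrary $b'\in\Br(X')$, some finite-abelian-descent information on $X$ that controls the pairing $\left(\Phi((x_w)),b'\right)_{\textup{BM}_K}$.

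\textbf{First I would} make precise the relationship between $\Br(X')$ and the Brauer groups of the finite abelian covers of $X$. Because $X'_{\overline K}\cong \prod_{\tau:L\hookrightarrow\overline K}\overline X^\tau$ (the Weil restriction geometrically splits as a product over the embeddings), I would analyze the Hochschild–Serre spectral sequence $\H^p(K,\H^q(\overline{X'},\G_m))\Rightarrow\H^{p+q}(X',\G_m)$ and compare it term by term with the corresponding sequence for $X$, using the homomorphisms $\phi$ introduced in \S\ref{notationsection}. The aim is to show that a class $b'\in\Br(X')$, after possibly adding an element of $\im(\H^2(K,-))$ (which does not affect the pairing), can be captured by classes of finite order on $X$ that become visible on a finite abelian cover. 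Concretely, the torsion/transcendental parts of $\Br(X')$ that are not hit by $\phi_{\Br}$ should be shown to be ``explained'' by the $\H^1(K,\Pic(\overline{X'}))$-term, and via Shapiro/Weil-restriction this $\Pic$-term matches $\H^1(L,\Pic(\overline X))$, which is exactly what finite abelian descent on $X$ controls.

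\textbf{The key technical step} is then to translate a class $b'\in\Br(X')$ into the statement that, for every adelic point $(x_w)$ surviving \emph{all} finite abelian \BM conditions on $X$, the value $\left(\Phi((x_w)),b'\right)_{\textup{BM}_K}$ vanishes. I would do this by picking a finite abelian torsor $h:W\buildrel H\over\to X$ whose twists trivialize the relevant part of $b'$: on a twist $_\sigma W$, the pulled-back class $\phi_{\Br}^{-1}$-image of $b'$ becomes \emph{constant} (lies in $\im(\H^2(L,-))$), so the \BM pairing with $b'$ is computed from a single twist and reduces, via Proposition~\ref{fdescentformula} and the pairing compatibility, to a pairing that the $\fab,\Br$-condition already annihilates. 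That $(x_w)\in X(\A_L)^{\fab,\Br}$ means precisely that for each finite abelian $h$ there is a twist $_\sigma W$ and a lift of $(x_w)$ into $_\sigma W(\A_L)^{\Br}$; pushing this lift through $\phi_{\Br}$ kills $b'$.

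\textbf{The main obstacle I expect} is the gap between $\im(\phi_{\Br})$ and all of $\Br(X')$: the map $\phi_{\Br}$ factors through corestriction, and corestriction–restriction does not recover arbitrary classes, so one cannot naively pull every $b'$ back to $X$. The crux is therefore showing that the part of $\Br(X')$ invisible to $\phi_{\Br}$ is controlled by the \emph{algebraic} piece $\H^1(K,\Pic\overline{X'})$ and hence by finite abelian covers, rather than by the genuinely transcendental $\Br(\overline{X'})^{\Gamma_K}$. This is why the statement is only an inclusion into $X'(\A_K)^{\Br}$ for the \emph{finite abelian} \BM set of $X$ (not an equality with the $\Br$ set of $X$), and it is also why, as the introduction warns, this argument does not upgrade to the full inclusion~(\ref{eqBr}): the transcendental Brauer classes on $X'$ simply are not seen by abelian descent, and only after passing to the full étale (``universal cover'') limit in \S\ref{mainproofsection} does one recover them.
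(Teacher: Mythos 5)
Your proposal correctly isolates the real difficulty (the map $\phi_{\Br}$ is far from surjective, so one must capture $\Br(X')$ through finite abelian covers of $X$), and the outer mechanism you describe — pick a finite abelian torsor, pass to the twist on which your adelic point lifts to a $\Br$-surviving point, and use the corestriction compatibility of the pairing together with the fact that adding $\im(\H^2(K,-))$ is harmless — is exactly the shape of the paper's argument. But the way you propose to close the gap contains a claim that is false and hides the actual content of the proof. You assert that the part of $\Br(X')$ invisible to $\phi_{\Br}$ is ``controlled by the algebraic piece $\H^1(K,\Pic\overline{X'})$ \ldots rather than by the genuinely transcendental $\Br(\overline{X'})^{\Gamma_K}$.'' This is not so: since $\overline{X'}\cong\prod_{s}\overline{X}_s$ is a product of $d=[L:K]$ conjugates of $\overline{X}$, the group $\H^2(\overline{X'},\mu_n)$ contains, besides $\bigoplus_s\H^2(\overline{X}_s,\mu_n)$, the K\"unneth cross terms $\Hom(\overline{S}_s\otimes\overline{S}_t,\mu_n)$ (Proposition \ref{H12}(2)); their $\Gamma_K$-invariants can lift to genuinely transcendental classes of $\H^2(X',\mu_n)$ that are neither algebraic nor in the image of $\phi$ modulo constants. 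No finite abelian descent statement phrased purely in terms of $\H^1(K,\Pic\overline{X'})\simeq\H^1(L,\Pic\overline{X})$ will see these classes, so your ``key technical step'' as stated would fail for them.

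The missing idea is the paper's use of a \emph{universal torsor of $n$-torsion} $f:T\buildrel{S}\over\to X$ (after first reducing, via the Kummer sequence, to showing the pairing vanishes against $\H^2(X',\mu_n)$ for each $n$). Its Weil restriction $f':T'\to X'$ is again a universal torsor of $n$-torsion, and the crucial point (Lemma \ref{vanish}) is that the cross terms $\epsilon_{s<t}(\Hom(\overline{S}_s\otimes\overline{S}_t,\mu_n))$ die under $\overline{f'}^*$, because each factor's torsor class $[\overline{T}_s]$ trivializes when pulled back to $\overline{T}_s$ itself. Combining this with Shapiro's lemma for the terms $\H^i(K,\H^1(\overline{X'}))\simeq\H^i(L,\H^1(\overline{X}))$ of the Leray spectral sequences yields the genuinely quantitative statement ${_\sigma f}'^*(\H^2(X',\mu_n))\subset\phi(\H^2({_\sigma T},\mu_n))+\im(\H^2(K,\mu_n))$ (Proposition \ref{finalpropforunfair}), after which your concluding step goes through verbatim with $W={_\sigma T}$. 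So the torsor you need is not an arbitrary one adapted to a given $b'$, but this specific universal one, and the reason it works is the vanishing of the transcendental cross terms on it — precisely the phenomenon your sketch declares irrelevant.
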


\subsection*{First reduction}
Since the Brauer group of a smooth variety $V$ is torsion, it is the union of its $n$-torsion parts for all positive integers $n$.
By the Kummer sequence, we have a surjective homomorphism
$$\H^2(V,\mu_n)\To\Br(V)[n]=\textup{Ker}[\Br(V)\buildrel{\cdot n}\over\to\Br(V)].$$
We denote by $V(\A_k)^{\H^2(V,\mu_n)}$ the \BM subset given by the subgroup $\Br(V)[n]$. It suffices to prove for every positive integer $n$ the following inclusion
$$\Phi\left(X(\A_L)^{\fab,\Br}\right)\subset X'(\A_K)^{\H^2(X',\mu_n)}.\leqno{(\ref{unfaireq}')}$$

\subsection*{Convention}
As most cohomology groups in this section  have coefficients in the sheaf $\mu_n$, to simplify the notation we write $\H^i(V)$ in place of $\H^i(V,\mu_n)$ for any variety $V$.

\subsection*{Structure of the proof}
The proof of $(\ref{unfaireq}')$ is rather long, it consists of several main ingredients.
Spectral sequences allows us to compare Brauer groups of $X$ and $X'$. Over an algebraic closure of concerned number fields, second degree cohomologies  of products were studied by Skorobogatov\textendash Zarhin, and were extended by the first author to quasi-projective varieties via the theory of universal torsors of $n$-torsion. First of all, we recall relevant notions in \S \ref{universaltorsorsection}-\ref{ss-subsection}. Secondly, we study cohomologies of the Weil restriction in \S \ref{weilressubsection} and compare diverse cohomology groups in \S \ref{comparisonsubsection}. We complete the proof in \S \ref{proofofunfairsubsection}.

\subsection{Universal torsors of $n$-torsion}\label{universaltorsorsection}
Let $V$ be a smooth geometrically connected  variety defined over a number field $k$. D. Harari and A. Skorobogatov generalised the theory of torsors to not necessarily proper varieties and defined the extended type of a $V$-torsor. They proved that a $V$-torsor of a given extended type exists if $V(\A_k)^\Br\neq\varnothing$, cf. \cite[Corollary 3.6]{HSk13}.

In particular, for  a positive integer $n$ the first  author defined a \emph{universal torsor  of $n$-torsion} $h:W\buildrel{H}\over\to V$ to be a $V$-torsor under a finite commutative $k$-group $H$ of $n$-torsion such that its extended type induces an isomorphism of Galois modules $H^*\buildrel{\simeq}\over\To\H^1(\overline{V})$, where $H^*=\Hom(H,\mu_n)$ denotes the group of characters of $H$, cf. D\'efinition 2.1, Proposition 2.2 and the equation (2-3) thereafter in \cite{Cao2017}.
Universal torsors of $n$-torsion are again smooth and geometrically connected, cf. the paragraph immediately before \cite[Corollaire 2.4]{Cao2017}, to which we are allowed to apply the forthcoming Proposition \ref{H12} and Lemma \ref{lem2}.

To prove Proposition \ref{unfairprop}, we may assume that $X(\A_L)^{\fab,\Br}\neq\varnothing$. A fortiori $X(\A_L)^{\Br}\neq\varnothing$ and therefore a universal $X$-torsor of $n$-torsion $f:T\buildrel{S}\over\to X$ exists for every fixed $n$. By passing to $\overline{L}$ with the help of the forthcoming Proposition \ref{H12}(\ref{H1}), we check that its Weil restriction $f':T'\buildrel{S'}\over\to X'$ is  a universal $X'$-torsor of $n$-torsion. For any class $[\sigma]\in\H^1(L,S)=\H^1(K,S')$, the twist ${_\sigma\nosp f}:{_\sigma\nosp T}\buildrel{_\sigma\nosp S}\over\to X$ is also a universal torsor of $n$-torsion whose Weil restriction ${_\sigma\nosp f'}:{_\sigma\nosp T'}\buildrel{_\sigma\nosp S'}\over\to X'$ is the twist of $f'$ by $\sigma$.

\subsection{Cohomologies of products}
The first author generalised results of A. Skorobogatov and Yu. Zarhin \cite{SkZarhin} to quasi-projective varieties.
The following result is the special case for $\mu_n$ of \cite[Corollaire 2.7]{Cao2017} (see also its corrigendum \cite[Th\'eor\`eme 2.1]{CaoCorr2017}).

Let $\overline{V_1}$ and $\overline{V_2}$ be  smooth  geometrically connected variety defined over a separably closed field $C$ (which will be $\overline{L}$ for our application). Denote by $p_1$ and $p_2$ the canonical projections from the product $\overline{V_1}\times\overline{V_2}$ to its components. As $\overline{V_1}$ and $\overline{V_2}$ possess $C$-rational points, there exists a universal torsor of $n$-torsion over each of them  respectively denoted by $\overline{W_1}\buildrel{\overline{S_1}}\over\to\overline{V_1}$ and $\overline{W_2}\buildrel{\overline{S_2}}\over\to\overline{V_2}$, cf. \cite[Proposition 1.3]{HSk13}.
We define $$\epsilon_{12}:\Hom(\overline{S_1}\otimes \overline{S_2},\mu_n)\to\H^2(\overline{V_1}\times\overline{V_2})$$
to be
$$\psi\mapsto\psi_*(p^*_1[\overline{W_1}]\cup p^*_2[\overline{W_2}]).$$ Here, and from now on, the tensor product $\otimes$ is always taken over $\Z/n\Z$ which is omitted to simplify the notation.

\begin{prop}\label{H12}
Let  $\overline{V_1}$ and $\overline{V_2}$ be  smooth  geometrically connected variety defined over a separably closed field.
There are  isomorphisms of abelian groups
\begin{enumerate}
\item\label{H1} $\xymatrix{
\H^1(\overline{V_1})\oplus\H^1(\overline{V_2})\ar[r]^-{p^*_1+p^*_2}&\H^1(\overline{V_1}\times\overline{V_2}),
}$
\item\label{H2} $\xymatrix{
\H^2(\overline{V_1})\oplus\H^2(\overline{V_2})\oplus\Hom(\overline{S_1}\otimes\overline{S_2},\mu_n)\ar[rr]^-{p^*_1+p^*_2+\epsilon_{12}}&&\H^2(\overline{V_1}\times\overline{V_2}).
}$
\end{enumerate}
\end{prop}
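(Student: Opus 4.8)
The plan is to compute $\H^\bullet(\overline{V_1}\times\overline{V_2})$ (coefficients in $\mu_n$ throughout, following the convention of this section) through the Leray spectral sequence of the first projection $p_1\colon\overline{V_1}\times\overline{V_2}\to\overline{V_1}$,
$$E_2^{p,q}=\H^p(\overline{V_1},R^qp_{1*}\mu_n)\Longrightarrow\H^{p+q}(\overline{V_1}\times\overline{V_2}),$$
and to read off degrees $1$ and $2$. The two essential inputs are, first, a base-change identification $R^0p_{1*}\mu_n=\mu_n$ and $R^qp_{1*}\mu_n=\underline{\H^q(\overline{V_2})}$ (the constant sheaf) for $q\geq1$, coming from the product structure of the fibration; and second, the fact that both factors carry $C$-rational points, so that choosing $v_1\in\overline{V_1}(C)$ and $v_2\in\overline{V_2}(C)$ produces sections $s_1=(\mathrm{id},v_2)$ of $p_1$ and $s_2=(v_1,\mathrm{id})$ of $p_2$. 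These sections make each $p_i^*$ split injective and force the relevant edge differentials to vanish.

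For part (\ref{H1}) the five-term exact sequence reads
$$0\to\H^1(\overline{V_1})\to\H^1(\overline{V_1}\times\overline{V_2})\to\H^1(\overline{V_2})\xrightarrow{d_2}\H^2(\overline{V_1}),$$
where I have used $p_{1*}\mu_n=\mu_n$ and $\H^0(\overline{V_1},R^1p_{1*}\mu_n)=\H^1(\overline{V_2})$. The section $s_1$ splits $p_1^*$ and in particular kills $d_2$, while $p_2^*$ splits the surjection onto $\H^1(\overline{V_2})$; hence $p_1^*+p_2^*$ is the asserted isomorphism.

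For part (\ref{H2}) I would push the same analysis one degree higher. The sections split the abutment filtration on $\H^2(\overline{V_1}\times\overline{V_2})$, so it decomposes as $E_\infty^{2,0}\oplus E_\infty^{1,1}\oplus E_\infty^{0,2}$. Here $E_\infty^{2,0}=\H^2(\overline{V_1})$ is the image of $p_1^*$, and $E_\infty^{0,2}=\H^0(\overline{V_1},R^2p_{1*}\mu_n)=\H^2(\overline{V_2})$ is the image of $p_2^*$, the vanishing of the outgoing differentials again being forced by the sections. It remains to identify the middle piece $E_\infty^{1,1}$, a subquotient of $E_2^{1,1}=\H^1(\overline{V_1},\underline{\H^1(\overline{V_2})})$, with the cross term. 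Using the defining property of the universal torsors of $n$-torsion one has $\H^1(\overline{V_i})=\overline{S_i}^*=\Hom(\overline{S_i},\mu_n)$, whence $\H^1(\overline{V_1})\otimes\H^1(\overline{V_2})=\overline{S_1}^*\otimes\overline{S_2}^*=\Hom(\overline{S_1}\otimes\overline{S_2},\mu_n)$ by duality of finite $\Z/n\Z$-modules. That $\epsilon_{12}$ realizes the isomorphism onto this piece follows from the cup-product structure of the spectral sequence together with the observation that $p_1^*[\overline{W_1}]\cup p_2^*[\overline{W_2}]$ restricts to zero on every fibre $\{x\}\times\overline{V_2}$ and on every slice $\overline{V_1}\times\{y\}$, so that it lies in the purely ``mixed'' graded piece $E_\infty^{1,1}$.

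The main obstacle is the non-properness of the $\overline{V_i}$, which enters twice. First, the base-change identity $R^qp_{1*}\mu_n=\underline{\H^q(\overline{V_2})}$ is automatic only for proper fibres; for an open $\overline{V_2}$ it must be established separately in the range $q\leq2$. Second, and more seriously, the universal-coefficient description of $E_2^{1,1}=\H^1(\overline{V_1},\underline{\H^1(\overline{V_2})})$ a priori carries a correction term $\textup{Tor}_1^{\Z/n\Z}(\H^2(\overline{V_1}),\H^1(\overline{V_2}))$, and one must also see that the differential $d_2\colon E_2^{1,1}\to E_2^{3,0}=\H^3(\overline{V_1})$ vanishes, so that the middle piece is \emph{exactly} $\H^1(\overline{V_1})\otimes\H^1(\overline{V_2})$ with no spurious summand. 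Controlling these terms in the quasi-projective setting is the technical heart of the statement: I would reduce to the proper case treated by Skorobogatov and Zarhin \cite{SkZarhin} by passing to a smooth compactification and tracking the discrepancy through the universal torsors of $n$-torsion, which is precisely the device that isolates $\H^1(\overline{V_i})$ cleanly. Alternatively, since the assertion is the $\mu_n$-specialisation of \cite[Corollaire 2.7]{Cao2017}, one may simply invoke that result.
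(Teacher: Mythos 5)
The paper offers no proof of this proposition at all: it is quoted as the $\mu_n$-special case of \cite[Corollaire 2.7]{Cao2017} together with its corrigendum \cite[Th\'eor\`eme 2.1]{CaoCorr2017}, so the one-sentence alternative at the end of your proposal (``simply invoke that result'') is exactly the paper's treatment. Judged as a self-contained argument, however, your sketch is fine for part (1) but has a genuine gap in part (2). For part (1), note that your first ``obstacle'' is not one: the identification $R^qp_{1*}\mu_n=\underline{\H^q(\overline{V_2})}$ follows from the \emph{smooth} base change theorem (applied to the smooth structure morphism $\overline{V_1}\to\Spec(C)$, with $n$ invertible in $C$), which requires no properness of the fibres; after that, the sections make the bottom-row edge maps split injective, killing all differentials landing there, and your five-term-sequence argument goes through.

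The gap in part (2) is precisely the identification of the middle graded piece, and it is worse than a bookkeeping problem. You propose $\E_\infty^{1,1}\cong\H^1(\overline{V_1})\otimes\H^1(\overline{V_2})\cong\Hom(\overline{S_1}\otimes\overline{S_2},\mu_n)$ ``by duality of finite $\Z/n\Z$-modules'', realized by $\epsilon_{12}$ ``from the cup-product structure''. These two groups are abstractly isomorphic, but the \emph{natural} map between them --- equivalently, cup product of degree-one classes with $\mu_n$-coefficients --- is not an isomorphism once $n$ is not squarefree: for $\overline{S_1}=\overline{S_2}=\Z/2\Z$ and $n=4$, the evaluation map $\overline{S_1}^*\otimes\overline{S_2}^*\to\Hom(\overline{S_1}\otimes\overline{S_2},\mu_4)$ is zero while both sides are $\Z/2\Z$. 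Hence the cross term is in general \emph{not} generated by classes $p_1^*\alpha\cup p_2^*\beta$ with $\alpha,\beta\in\H^1(\cdot,\mu_n)$, and your d\'evissage breaks exactly there; this is why $\epsilon_{12}$ is defined through the torsor classes $[\overline{W_i}]\in\H^1(\overline{V_i},\overline{S_i})$, valued in the finite groups $\overline{S_i}$, rather than through $\mu_n$-classes. The correct identification avoids universal coefficients (and thereby the Tor correction you worry about) altogether: by smooth base change and the torsor interpretation of $\H^1$ with constant finite coefficients, $\E_2^{1,1}=\H^1\bigl(\overline{V_1},\underline{\H^1(\overline{V_2})}\bigr)=\Hom\bigl(\pi_1^{\textup{ab}}(\overline{V_1})/n,\H^1(\overline{V_2})\bigr)=\Hom(\overline{S_1}\otimes\overline{S_2},\mu_n)$, using that the defining property of the universal torsor of $n$-torsion makes $\overline{S_1}$ the mod-$n$ abelianized \'etale fundamental group; what then still has to be proved is that $\epsilon_{12}$ induces precisely this isomorphism, and that is the actual content of Cao's result and of its corrigendum. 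Finally, your fallback reduction to the proper case by smooth compactification is not viable as stated: both $\H^1$ and $\H^2$ of the factors change under compactification and the universal torsors of the open varieties do not extend --- handling this is the very point of Cao's generalization of Skorobogatov--Zarhin.
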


\subsection{Leray spectral sequence}\label{ss-subsection}

Consider a first quadrant spectral sequence $$\E_2^{i,j}\Longrightarrow \H^{i+j}$$ that converges to $\H^{\bullet}$. In particular, $\H^2$ has a filtration $$\H^2=\H^2_0\supset\H^2_1\supset\H^2_2\supset\H^2_3=0$$ such that $\H^2_i/\H^2_{i+1}\simeq\E_\infty^{i,2-i}$ for $i\in\{0,1,2\}$. In other words, $$\H_1^2=\ker(\H^2\to\E_\infty^{0,2})=\ker(\H^2\to\E_2^{0,2})$$ since $\E_\infty^{0,2}\to\E_2^{0,2}$ is injective.

\begin{prop}\label{ssprop}
Let $\E_2^{i,j}\Longrightarrow \H^{i+j}$ be a convergent first quadrant spectral sequence.
Suppose that the homomorphism $\E_2^{3,0}\to\H^3$ is injective, then the following sequences are exact
$$\E_2^{2,0}\to\H_1^2\to\E_2^{1,1}\to0,\leqno{(1)}$$
$$0\to\H_1^2\to\H^2\to\E_2^{0,2}\to\E_2^{2,1}.\leqno{(2)}$$
\end{prop}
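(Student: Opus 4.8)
The plan is to extract the two exact sequences directly from the standard machinery of a first-quadrant spectral sequence, using the hypothesis that $\E_2^{3,0}\to\H^3$ is injective only at the one place where it is needed. Recall the general setup already laid out before the statement: $\H^2$ carries the decreasing filtration $\H^2=\H^2_0\supset\H^2_1\supset\H^2_2\supset\H^2_3=0$ with graded pieces $\H^2_i/\H^2_{i+1}\simeq\E_\infty^{i,2-i}$, and we are told $\H^2_1=\ker(\H^2\to\E_2^{0,2})$ because the edge map $\E_\infty^{0,2}\hookrightarrow\E_2^{0,2}$ is injective. So I would first pin down each $\E_\infty^{i,2-i}$ as a subquotient of $\E_2^{i,2-i}$ by chasing the relevant differentials, and then reassemble the filtration into the two claimed sequences.

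First I would treat sequence $(2)$. The term $\E_\infty^{0,2}$ is the kernel of $d_2:\E_2^{0,2}\to\E_2^{2,1}$, since $\E_2^{0,2}$ receives no incoming differential (it sits on the left edge) and its only outgoing differential of relevant degree is $d_2$; higher differentials $d_r$ for $r\geq3$ out of the $(0,2)$ spot land in $\E_r^{r,3-r}$, which vanishes for $r\geq3$ in a first-quadrant sequence (the target has negative second index once $r>3$, and the $r=3$ target $(3,0)$ receives $d_3$ from $(0,2)$ but this is controlled below). Thus $\E_\infty^{0,2}=\ker(d_2:\E_2^{0,2}\to\E_2^{2,1})$ provided the $d_3$ emanating from $(0,2)$ vanishes, and this is exactly where the hypothesis enters: $d_3:\E_3^{0,2}\to\E_3^{3,0}$ has target a subquotient of $\E_2^{3,0}$, and the injectivity of $\E_2^{3,0}\to\H^3$ forces $\E_2^{3,0}=\E_\infty^{3,0}$, which kills all differentials into $(3,0)$, in particular this $d_3$. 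Combining $\H^2/\H^2_1\simeq\E_\infty^{0,2}=\ker(\E_2^{0,2}\to\E_2^{2,1})$ with $\H^2_1=\ker(\H^2\to\E_2^{0,2})$ yields the exactness of
\[
0\to\H^2_1\to\H^2\to\E_2^{0,2}\to\E_2^{2,1},
\]
the map $\H^2\to\E_2^{0,2}$ being the edge homomorphism.

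Next I would establish sequence $(1)$, which describes $\H^2_1$ itself via the remaining two graded pieces $\E_\infty^{1,1}$ and $\E_\infty^{2,0}$. Here $\E_\infty^{1,1}$ is a subquotient of $\E_2^{1,1}$: it has no incoming $d_2$ from $(-1,2)$, an outgoing $d_2:\E_2^{1,1}\to\E_2^{3,0}$, and again no relevant higher differentials survive. Because the same injectivity hypothesis gives $\E_2^{3,0}=\E_\infty^{3,0}$, the differential $d_2:\E_2^{1,1}\to\E_2^{3,0}$ must vanish, so $\E_2^{1,1}$ supports no outgoing differential and $\E_\infty^{1,1}=\E_2^{1,1}/(\text{image of incoming differentials})=\E_2^{1,1}$, since incoming differentials into $(1,1)$ come from $(-1,2)=0$. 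Similarly $\E_\infty^{2,0}$ is the quotient of $\E_2^{2,0}$ by the image of $d_2:\E_2^{0,1}\to\E_2^{2,0}$, hence is a quotient of $\E_2^{2,0}$. Feeding $\H^2_1/\H^2_2\simeq\E_\infty^{1,1}=\E_2^{1,1}$ and $\H^2_2/\H^2_3=\H^2_2\simeq\E_\infty^{2,0}$ (a quotient of $\E_2^{2,0}$) into the short exact sequence $0\to\H^2_2\to\H^2_1\to\H^2_1/\H^2_2\to0$ gives exactly
\[
\E_2^{2,0}\to\H^2_1\to\E_2^{1,1}\to0,
\]
where the first arrow is $\E_2^{2,0}\twoheadrightarrow\E_\infty^{2,0}\simeq\H^2_2\hookrightarrow\H^2_1$ and the second is the composite $\H^2_1\twoheadrightarrow\H^2_1/\H^2_2\simeq\E_2^{1,1}$.

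The only subtle point, and the one I would write out most carefully, is the role of the hypothesis on $\E_2^{3,0}\to\H^3$. Its job is precisely to guarantee $\E_2^{3,0}=\E_\infty^{3,0}$, which simultaneously kills the two potentially obstructing differentials: the $d_3$ out of $(0,2)$ needed for sequence $(2)$ and the $d_2$ out of $(1,1)$ needed for sequence $(1)$. I would verify the implication ``$\E_2^{3,0}\to\H^3$ injective $\Rightarrow$ $\E_2^{3,0}=\E_\infty^{3,0}$'' by noting that the edge map $\E_\infty^{3,0}\to\H^3$ is injective and factors the composite $\E_2^{3,0}\twoheadrightarrow\E_\infty^{3,0}\hookrightarrow\H^3$; if the full composite is injective, the surjection $\E_2^{3,0}\twoheadrightarrow\E_\infty^{3,0}$ must be an isomorphism. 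Everything else is a routine diagram chase through the filtration, so the main obstacle is bookkeeping the differentials correctly rather than any genuine difficulty.
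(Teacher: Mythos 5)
Your proposal is correct and follows essentially the same route as the paper: both arguments use the injectivity of $\E_2^{3,0}\to\H^3$ to force $\E_2^{3,0}=\E_3^{3,0}=\E_\infty^{3,0}$, hence the vanishing of $d_2:\E_2^{1,1}\to\E_2^{3,0}$ and $d_3:\E_3^{0,2}\to\E_3^{3,0}$, and then read off the two sequences from the filtration of $\H^2$. The paper phrases the differential computations as short exact sequences ``by definition'' of the spectral sequence pages rather than as vanishing statements, but the content is identical.
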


\begin{proof}
For a first quadrant spectral sequence, we have $\E_4^{3,0}=\E_\infty^{3,0}\subset\H^3$ and successive surjective homomorphisms
$$\E_2^{3,0}\to\E_3^{3,0}\to\E_4^{3,0}.$$ By assumption, the composition $\E_2^{3,0}\to\H^3$ is injective, then both $\E_2^{3,0}\to\E_3^{3,0}$ and $\E_3^{3,0}\to\E_4^{3,0}$ are  isomorphisms.

(1) Consider the exact sequence
$$0\to\E_{\infty}^{2,0}\to \H_1^2\to\E_{\infty}^{1,1}\to0$$
deduced from the filtration of $\H^2$. As $\E_2^{2,0}$ maps surjectively onto $\E_{\infty}^{2,0}$,
$$\E_2^{2,0}\to \H_1^2\to\E_{\infty}^{1,1}\to0$$ is exact. On the other hand, the exact sequence given by definition
$$0\to\E_3^{1,1}\to\E_2^{1,1}\to\E_2^{3,0}\buildrel{\simeq}\over\to\E_3^{3,0}\to0$$
with the last map  an isomorphism implies that $\E_{\infty}^{1,1}=\E_3^{1,1}=\E_2^{1,1}$, then the first desired exact sequence follows.

(2) Again from the filtration of $\H^2$, we have an exact sequence
$$0\to\H_1^2\to\H^2\to\E_\infty^{0,2}\to0.$$
On the other hand, we know that $\E_\infty^{0,2}=\E_4^{0,2}$ and we have an exact sequence by definition
$$0\to\E_4^{0,2}\to\E_3^{0,2}\to\E_3^{3,0}\buildrel{\simeq}\over\to\E_4^{3,0}\to0$$
with the last map an isomorphism. It follows that $$\E_\infty^{0,2}=\E_3^{0,2}=\ker(\E_2^{0,2}\to\E_2^{2,1}).$$ Whence we obtain the second desired  exact sequence.
\end{proof}

For a $k$-variety $V$, we have the Leray spectral sequence for $V\to\Spec(k)$ with coefficients in the sheaf $\mu_n$
\begin{equation}\label{ss}
\E_2^{i,j}=\H^i(k,\H^j(\overline{V}))\Longrightarrow \H^{i+j}(V).
\end{equation}
The term $\H_1^2$ appeared in the filtration of $\H^2=\H^2(V)$ is the kernel $\ker(\H^{2}(V)\to\H^2(\overline{V})).$

When $V$ is geometrically connected, we have $\H^0(\overline{V})=\mu_n$ and hence $\E_2^{i,0}=\H^i(k)$ for all $i$. The homomorphism $\E_2^{3,0}\to\H^3$ in Proposition \ref{ssprop} is nothing but $\H^3(k)\to\H^3(V)$.

\begin{lem}\label{H3}
Suppose that $k$ is a number field and  $V(\A_k)\neq\varnothing$, then the homomorphism
$$\H^3(k)\to\H^3(V)$$
is injective.
\end{lem}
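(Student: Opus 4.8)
The plan is to factor the map $\H^3(k)\to\H^3(V)$ through the local points furnished by the hypothesis $V(\A_k)\neq\varnothing$, and then to invoke the concentration of high\-degree Galois cohomology of a number field at its real places. Throughout, recall that by the running convention $\H^3(k)=\H^3(k,\mu_n)$ and $\H^3(V)=\H^3(V,\mu_n)$.

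First I would fix an ad\`elic point $(x_v)_{v\in\Omega_k}\in V(\A_k)$. For each place $v$ the component $x_v\in V(k_v)$ is a $k$\-morphism $\Spec(k_v)\to V$, hence induces a pullback $x_v^*:\H^3(V)\to\H^3(k_v,\mu_n)$. Since the composite $\Spec(k_v)\to V\to\Spec(k)$ is just the structural morphism induced by $k\hookrightarrow k_v$, the composition
$$\H^3(k)\To\H^3(V)\buildrel{x_v^*}\over\To\H^3(k_v,\mu_n)$$
coincides with the localisation map $\textup{res}_v:\H^3(k,\mu_n)\to\H^3(k_v,\mu_n)$. Consequently it is enough to show that the product of localisation maps
$$\H^3(k,\mu_n)\To\prod_{v\in\Omega_k}\H^3(k_v,\mu_n)$$
is injective; the existence of the factoring maps $x_v^*$ is exactly where the hypothesis $V(\A_k)\neq\varnothing$ is used.

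For the injectivity of this localisation map I would appeal to Poitou\textendash Tate duality for the finite Galois module $\mu_n$ over the number field $k$. In degrees $\geq 3$ the global cohomology is supported on the archimedean, and in fact the real, places: the restriction maps assemble into an isomorphism
$$\H^3(k,\mu_n)\buildrel{\simeq}\over\To\bigoplus_{v\ \text{real}}\H^3(k_v,\mu_n)$$
(a standard consequence of the arithmetic duality theorems, e.g. in Milne's \emph{Arithmetic Duality Theorems} or in Neukirch\textendash Schmidt\textendash Wingberg). In particular the localisation map above is injective, which is all we need. The complex places contribute $\H^3(\C,\mu_n)=0$ and play no role, so only the real components of the ad\`elic point are actually exploited.

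The only substantive point to pin down is the clean citation of the degree $\geq 3$ Poitou\textendash Tate statement, together with the compatibility of its restriction maps with $\textup{res}_v$. The geometric input\textemdash that a rational point over $k_v$ turns the pullback $x_v^*$ into the localisation map\textemdash is purely formal, and no smoothness or properness of $V$ is needed beyond the existence of a $k_v$\-point at each real place.
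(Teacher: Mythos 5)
Your proof is correct and is essentially the paper's own argument: both use the ad\`elic point to factor $\H^3(k)\to\H^3(V)$ through the local restriction maps, and both conclude by the concentration of $\H^3(k,\mu_n)$ at the real places (the paper cites Serre's \emph{Galois Cohomology}, Ch.~II \S 6, Th\'eor\`eme B, which is the same statement as your Poitou--Tate citation). The only cosmetic difference is that you pull back along the points $x_v^*$ directly, whereas the paper runs the diagram through $\H^3(V_{k_v})$ using the split injectivity of $\H^3(k_v)\to\H^3(V_{k_v})$ furnished by the $k_v$-point.
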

\begin{proof}
Under the assumption, the variety $V$ possesses  $k_v$-rational points for each $v\in\Omega_k$ and hence $\H^3(k_v)\to\H^3(V_{k_v})$ is injective. According to \cite[Ch. II \S 6 Th\'eor\`eme B]{SerreGaloisCoh} applied to the finite sheaf $\mu_n$, the homomorphism $\H^3(k)\to\prod_{v\in\Omega_k}\H^3(V_{k_v})$ is an isomorphism. Then the desired injectivity follows from the commutative diagram
$$\xymatrix{
\H^3(k)\ar[r]\ar[d]&\H^3(V)\ar[d]\\
\prod_{v\in\Omega_k}\H^3(k_v)\ar[r]&\prod_{v\in\Omega_k}\H^3(V_{k_v})
}$$
\end{proof}

\begin{cor}\label{sscor}
Suppose that $V$ is a geometrically connected variety defined over a number field $k$ such that $V(\A_k)\neq\varnothing$. If $\H_1^2(V)$ denotes the kernel  $\ker(\H^{2}(V)\to\H^2(\overline{V})),$ then we have exact sequences
$$\displaystyle\H^2(k)\to\H_1^2(V)\to\H^1(k,\H^1(\overline{V}))\to0,\leqno{(1)}$$
$$\displaystyle0\to\H_1^2(V)\to\H^2(V)\to\H^0(k,\H^2(\overline{V}))\to\H^2(k,\H^1(\overline{V})).\leqno{(2)}$$
\end{cor}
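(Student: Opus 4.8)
The plan is to obtain Corollary \ref{sscor} as a direct specialization of the abstract Proposition \ref{ssprop} to the Leray spectral sequence (\ref{ss}) associated to $V\to\Spec(k)$ with coefficients in $\mu_n$. The two abstract exact sequences in Proposition \ref{ssprop} are exactly the shapes of the two sequences we want, so the entire content of the corollary reduces to verifying the hypotheses and translating each abstract term into its concrete meaning for this particular spectral sequence.

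First I would record the identification of terms. Since $V$ is geometrically connected we have $\H^0(\overline{V},\mu_n)=\mu_n$, whence $\E_2^{i,0}=\H^i(k,\mu_n)=\H^i(k)$ for all $i$; in particular $\E_2^{2,0}=\H^2(k)$ and $\E_2^{3,0}=\H^3(k)$. By the general remarks preceding Lemma \ref{H3}, the filtration term $\H_1^2$ coming from the spectral sequence coincides with $\ker(\H^2(V)\to\H^2(\overline{V}))$, which is precisely the group denoted $\H_1^2(V)$ in the statement. The remaining off-diagonal terms are read off directly: $\E_2^{1,1}=\H^1(k,\H^1(\overline{V}))$, $\E_2^{0,2}=\H^0(k,\H^2(\overline{V}))$, and $\E_2^{2,1}=\H^2(k,\H^1(\overline{V}))$. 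Substituting these into sequences (1) and (2) of Proposition \ref{ssprop} yields verbatim the two sequences of the corollary.

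The one hypothesis that must be checked before invoking Proposition \ref{ssprop} is that $\E_2^{3,0}\to\H^3$ is injective. Under the concrete identification this is the map $\H^3(k)\to\H^3(V)$, and its injectivity is exactly the conclusion of Lemma \ref{H3}, whose hypotheses $k$ a number field and $V(\A_k)\neq\varnothing$ are assumed in the corollary. So I would simply invoke Lemma \ref{H3} to supply the injectivity assumption.

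I do not expect any genuine obstacle here: the proof is a bookkeeping exercise in matching abstract spectral-sequence terms to their Leray values, with Lemma \ref{H3} discharging the sole nontrivial input. The only point requiring a line of care is confirming that the two distinct meanings of $\H_1^2$ agree, namely the purely formal filtration piece $\ker(\H^2\to\E_\infty^{0,2})=\ker(\H^2\to\E_2^{0,2})$ from the abstract setup versus the geometric kernel $\ker(\H^2(V)\to\H^2(\overline{V}))$; this identification is precisely the observation made just after (\ref{ss}), so once it is cited the corollary follows immediately by specializing Proposition \ref{ssprop}.
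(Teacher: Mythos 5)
Your proposal is correct and is exactly the paper's argument: the authors also obtain the corollary by specializing Proposition \ref{ssprop} to the Leray spectral sequence (\ref{ss}) and invoking Lemma \ref{H3} to supply the injectivity of $\H^3(k)\to\H^3(V)$, with the term identifications you list. The paper states this in one line; your version merely spells out the bookkeeping.
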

\begin{proof}
It is an immediate consequence of Proposition \ref{ssprop} and Lemma \ref{H3}.
\end{proof}

\subsection{Cohomologies of Weil restrictions}\label{weilressubsection}
Denote by $d$ the degree of the extension $L/K$.
For the fixed algebraic closure $\overline{K}$ of $K$, each $K$-embedding $\iota_{s}:L\to \overline{K}(1\leq s\leq d)$ extends to an element (also denoted by) $\iota_s:\overline{K}\to\overline{K}$ of $\Gamma_K$.
Let $\Gamma_K/\Gamma_L=\{\iota_s\Gamma_L|1\leq s\leq d\}$ be the set of left cosets of $\Gamma_L$ in $\Gamma_K$, and we denote by $\gamma_s$ the coset $\iota_s\Gamma_L$ for short.  If $\gamma\in\Gamma_K$ satisfies $\iota_t^{-1}\gamma\iota_s\in\Gamma_L$ for some $s$ and $t$, we define $\gamma_t\cdot\gamma=\gamma_s$. Then this is a right action of $\Gamma_K$ on $\Gamma_K/\Gamma_L$. For each embedding $\iota_{s}:L\to \overline{K}$, we define $\overline{X}\nosp{\gamma_s}=X\otimes_{L,\iota_s}\overline{K}$ to be the base change of $X$ from $L$ to $\overline{K}$ via $\iota_s$. We can check that the $\overline{K}$-variety $\overline{X}\nosp{\gamma_s}$ does not depend on the choice of representative $\iota_s$ of $\gamma_s$, it depends only on the coset $\gamma_s$ as indicated by the notation.
Since Weil restriction is compatible with base change,
 we have $\displaystyle\overline{X'}=\prod_{s=1}^d \overline{X}\nosp{\gamma_s}$.
We remark that the schemes $\overline{X}\nosp{\gamma_s}(1\leq s\leq d)$ are isomorphic to each other (not as $\overline{K}$-varieties). More precisely, if $\gamma_t\cdot\gamma=\gamma_s$ i.e. $\iota_t^{-1}\gamma\iota_s\in\Gamma_L$, the left Galois action of $\gamma$ on $\overline{K}$ (as the coefficient field of equations defining varieties $\overline{X}\nosp_s$) gives rise to an isomorphism of schemes $\gamma:\overline{X}\nosp{\gamma_t}\to\overline{X}\nosp{\gamma_s}$ such that the following diagram commutes
$$\xymatrix{
\overline{X}\nosp{\gamma_t}\ar[rr]^\gamma\ar[d]&&\overline{X}\nosp{\gamma_s}\ar[d] \\
\Spec(\overline{K})\ar[rd]_{\iota^*_t}\ar[rr]^{\gamma^*}&&\Spec(\overline{K})\ar[ld]^{\iota_s^*}\\
&\Spec(L)&
}$$
where $\gamma^*:\Spec(\overline{K})\to\Spec(\overline{K})$ is induced by $\gamma\in\Gamma_K$ and the triangle commutes since $(\iota_t^{-1}\gamma\iota_s)_{|L}=\textup{id}_L$.
Hence $\Gamma_K$ acts (on the right) on the product $\displaystyle\overline{X'}=\prod_{s=1}^d \overline{X}\nosp{\gamma_s}$ permuting the components as indicated by the notation itself. 
To simplify the notation, we write $\overline{X}\nosp_{s}=\overline{X}\nosp{\gamma_s}$.
Without lost of generality, we may assume that $\gamma_1$ is the subgroup $\Gamma_L$ and we simply write $\overline{X}$ for ${\overline{X}\nosp_1}$.
Given  its natural left  action on $\overline{K}$, there is an associated  left action of the Galois group $\Gamma_L$ on $\H^i(\overline{X})$. This last action gives rise to the left action of $\Gamma_K$ on  $\displaystyle\bigoplus_{s=1}^d\H^i(\overline{X}\nosp_s)$ by permutation. In other words, as a Galois module $\displaystyle\bigoplus_{s=1}^d\H^i(\overline{X}\nosp_s)$ is nothing but the induced module $\displaystyle\Ind_{\Gamma_L}^{\Gamma_K}\H^i(\overline{X})$.

If $p_s:\overline{X'}\to\overline{X}\nosp_s$ denotes the $s$-th natural projection, we claim that the associated homomorphism of abelian groups
$$\xymatrix{\displaystyle\bigoplus_{s=1}^d\H^i(\overline{X}\nosp_s)\ar[r]^-{\sum{p^*_s}}&\displaystyle\H^i(\prod_{s=1}^d\overline{X}\nosp_s)=\H^i(\overline{X'})}$$
is moreover a homomorphism of  $\Gamma_K$-modules.
Indeed,  if $\gamma_t\cdot\gamma=\gamma_s$ i.e. $\iota_t^{-1}\gamma\iota_s\in\Gamma_L$, then the right action of $\gamma$ on varieties sends $\overline{X}\nosp_t$ to $\overline{X}\nosp_s$. The associated left action on the cohomology level  denoted by $\gamma_\H=\gamma^*$  sends $\H^i(\overline{X}\nosp_s)$ to  $\H^i(\overline{X}\nosp_t)$. It follows from $\gamma\circ p_t=p_s\circ \gamma$ that $p^*_t\circ\gamma_\H=\gamma_\H\circ p^*_s$, and therefore the homomorphism $\sum{p^*_s}$ preserves the actions of any $\gamma\in\Gamma_K$.

\begin{prop}\label{inducedmodule}\ \\
\begin{enumerate}
\item For $i=1$,
$$\xymatrix{
\displaystyle\bigoplus_{s=1}^d\H^1(\overline{X}\nosp_s)\ar[r]^-{\sum{p^*_s}}&\H^1(\overline{X'})
}$$ is an isomorphism of $\Gamma_K$-modules.
\item For $i=2$,
$$\xymatrix{
\displaystyle\bigoplus_{s=1}^d\H^2(\overline{X}\nosp_s)\ar[r]^-{\sum{p^*_s}}&\H^2(\overline{X'})
}$$
is an injective homomorphism of $\Gamma_K$-modules. Moreover, as abelian groups $\bigoplus_{s=1}^d\H^2(\overline{X}\nosp_s)$ is a direct summand of $\H^2(\overline{X'})$ with complement the image of $\bigoplus_{1\leq s<t\leq d}\Hom(\overline{S}\nosp_s\otimes\overline{S}\nosp_t,\mu_n)$ by $\epsilon=\sum\epsilon_{s<t}$. \ \\
(cf. the paragraph before Proposition \ref{H12} for the definition of the map $\epsilon_{s<t}$ with notation $\epsilon_{12}$ instead.)
\end{enumerate}
\end{prop}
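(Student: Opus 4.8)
The plan is to argue by induction on the number of factors, proving the statement in the natural generality of Proposition \ref{H12}, namely for a finite product of smooth geometrically connected varieties over a separably closed field; the degree $d=[L:K]$ case then specializes to $\overline{X'}=\prod_{s=1}^d\overline{X}_s$. The two-factor formulas of Proposition \ref{H12} serve as the engine, and the $\Gamma_K$-equivariance of $\sum p_s^*$ already verified before the proposition lets me pass from abelian groups to Galois modules only at the very end. The base case $d=1$ is trivial. For the inductive step I would split off one factor, writing $\overline{X'}=\overline{X}_1\times V_2$ with $V_2=\prod_{s=2}^d\overline{X}_s$, and record that for $s\geq2$ the projection $p_s$ factors as $\overline{X'}\overset{q}{\to}V_2\to\overline{X}_s$ through the projection $q$ onto $V_2$. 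This compatibility of projections is what makes the induction telescope correctly.

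For part (1), Proposition \ref{H12}(\ref{H1}) gives $\H^1(\overline{X'})\cong\H^1(\overline{X}_1)\oplus\H^1(V_2)$ via $p_1^*+q^*$, and the inductive hypothesis identifies $\H^1(V_2)\cong\bigoplus_{s=2}^d\H^1(\overline{X}_s)$. Because $p_s^*=q^*\circ(\mathrm{proj}_s)^*$ for $s\geq2$, composing the two isomorphisms yields precisely $\sum_{s=1}^d p_s^*$, so this map is an isomorphism of abelian groups; combined with the equivariance established before the proposition, it is an isomorphism of $\Gamma_K$-modules.

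For part (2) I would first record that the product $\prod_{s=2}^d\overline{W}_s\to V_2$ of universal torsors of $n$-torsion is again a universal torsor of $n$-torsion over $V_2$: its structure group is $\prod_{s=2}^d\overline{S}_s$, whose character group $\bigoplus_{s=2}^d\overline{S}_s^*\cong\bigoplus_{s=2}^d\H^1(\overline{X}_s)$ is carried isomorphically onto $\H^1(V_2)$ by part (1), which is exactly the universality condition. Hence I may take $\overline{S}_{V_2}\cong\prod_{s=2}^d\overline{S}_s$ as the type group of $V_2$, so that tensoring gives $\Hom(\overline{S}_1\otimes\overline{S}_{V_2},\mu_n)\cong\bigoplus_{s=2}^d\Hom(\overline{S}_1\otimes\overline{S}_s,\mu_n)$. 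Feeding this and the inductive decomposition of $\H^2(V_2)$ into Proposition \ref{H12}(\ref{H2}) produces $\H^2(\overline{X'})\cong\bigoplus_{s=1}^d\H^2(\overline{X}_s)\oplus\bigoplus_{1\leq s<t\leq d}\Hom(\overline{S}_s\otimes\overline{S}_t,\mu_n)$, with the first summand the image of $\sum p_s^*$. In particular $\sum p_s^*$ is a split injection, and the equivariance again upgrades it to a homomorphism of $\Gamma_K$-modules.

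The step I expect to require the most care is verifying that the map $\epsilon_{1,V_2}$ produced by Proposition \ref{H12}(\ref{H2}) for the splitting $\overline{X'}=\overline{X}_1\times V_2$ really decomposes as $\sum_{s=2}^d\epsilon_{1<s}$ under the identifications above, so that the inductive decomposition stays compatible with the explicitly defined maps $\epsilon_{s<t}$ of the statement. Concretely, under $\overline{S}_{V_2}\cong\prod_{s\geq2}\overline{S}_s$ the pulled-back class $q^*[\prod_{s\geq2}\overline{W}_s]$ corresponds to $\sum_{s\geq2}p_s^*[\overline{W}_s]$, and one must check, via bilinearity of the cup product and functoriality of the pushforward $\psi_*$, that $\psi_*\big(p_1^*[\overline{W}_1]\cup q^*[\overline{W}_{V_2}]\big)$ splits accordingly into the individual $\epsilon_{1<s}(\psi_s)$ for $\psi=(\psi_s)_{s\geq2}$. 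This is the bookkeeping that reconciles the non-canonical splitting $\overline{X'}=\overline{X}_1\times V_2$ with the canonically defined $\epsilon$'s, and it is the only genuinely delicate point in the argument.
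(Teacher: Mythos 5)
Your proposal is correct and is essentially the paper's own argument: the paper's proof is the one-line statement that the result ``follows from Proposition \ref{H12} by induction on the number of components,'' and your induction --- splitting off one factor, using part (1) to see that the product of universal torsors of $n$-torsion is again one so that Proposition \ref{H12}(2) applies at each step, and checking that $\epsilon_{1,V_2}$ decomposes into the $\epsilon_{1<s}$ --- is exactly the fleshed-out version of that. The only difference is that you supply the bookkeeping the paper leaves implicit.
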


\begin{proof}
It follows from Proposition \ref{H12} by induction on the number of components.
\end{proof}

\begin{lem}\label{vanish}
With the notation above,
the composition
$$\xymatrix{
\Hom(\overline{S}\nosp_s\otimes\overline{S}\nosp_t,\mu_n)\ar[r]^-{\epsilon_{s<t}}&\H^2(\overline{X'})\ar[r]^-{\overline{f'}^*}&\H^2(\overline{T'})
}$$
vanishes.
\end{lem}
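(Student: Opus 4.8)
The plan is to reduce the statement to the elementary fact, recalled in \S\ref{torsorrecall}, that a torsor becomes trivial after pullback along its own structure morphism. First I would make the geometry over $\overline{K}$ explicit. Since Weil restriction commutes with base change, the torsor $\overline{f'}\colon\overline{T'}\to\overline{X'}$ is the product of the base changes of $f$: writing $\overline{f}_s\colon\overline{T}_s\to\overline{X}_s$ for the base change of $f\colon T\to X$ along $\iota_s\colon L\to\overline{K}$, we have $\overline{X'}=\prod_{s}\overline{X}_s$, $\overline{T'}=\prod_{s}\overline{T}_s$, and $\overline{f'}=\prod_{s}\overline{f}_s$. Each $\overline{f}_s$ is a universal torsor of $n$-torsion over $\overline{X}_s$ under $\overline{S}_s$, and these are precisely the torsors whose classes $[\overline{T}_s]\in\H^1(\overline{X}_s,\overline{S}_s)$ enter the definition of $\epsilon_{s<t}$. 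Denoting by $p_s\colon\overline{X'}\to\overline{X}_s$ and $q_s\colon\overline{T'}\to\overline{T}_s$ the projections, the commutative square $p_s\circ\overline{f'}=\overline{f}_s\circ q_s$ yields the identity $\overline{f'}^{*}\circ p_s^{*}=q_s^{*}\circ\overline{f}_s^{*}$.

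Next I would compute the composition on a character $\psi\in\Hom(\overline{S}_s\otimes\overline{S}_t,\mu_n)$. By definition $\epsilon_{s<t}(\psi)=\psi_*\bigl(p_s^{*}[\overline{T}_s]\cup p_t^{*}[\overline{T}_t]\bigr)$. Pullback in \'etale cohomology commutes with the pushforward $\psi_*$ along the coefficient map and with cup products, so
\[
\overline{f'}^{*}\epsilon_{s<t}(\psi)=\psi_*\Bigl(\bigl(\overline{f'}^{*}p_s^{*}[\overline{T}_s]\bigr)\cup\bigl(\overline{f'}^{*}p_t^{*}[\overline{T}_t]\bigr)\Bigr).
\]
Using the factorization from the previous paragraph, $\overline{f'}^{*}p_s^{*}[\overline{T}_s]=q_s^{*}\overline{f}_s^{*}[\overline{T}_s]$. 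But $\overline{f}_s^{*}[\overline{T}_s]=0$ in $\H^1(\overline{T}_s,\overline{S}_s)$, since a torsor is trivialized by pullback along its structure morphism. Hence the first cup-product factor already vanishes, forcing the whole cup product to be zero, so $\overline{f'}^{*}\epsilon_{s<t}(\psi)=\psi_*(0)=0$. As $\psi$ is arbitrary, the composition vanishes.

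There is no deep obstacle here; the argument is a direct computation once the right objects are in place. The two points that must be handled carefully are, first, identifying $\overline{f'}$ with the product $\prod_s\overline{f}_s$ and checking that the base changes $\overline{f}_s$ really are the universal torsors of $n$-torsion appearing in $\epsilon_{s<t}$ (this rests on the compatibility of Weil restriction with base change together with Proposition~\ref{H12}(\ref{H1}), already invoked in \S\ref{universaltorsorsection} to see that $f'$ is itself universal); and second, the naturality statements — that $\psi_*$ and cup product each commute with the pullback $\overline{f'}^{*}$ — which are standard functoriality properties of \'etale cohomology. The \emph{conceptual heart} is simply that pulling a universal torsor back along itself kills its class.
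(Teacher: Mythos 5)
Your argument is correct and is essentially the proof given in the paper: both reduce the claim, via the compatibility $\overline{f'}^{*}\circ p_s^{*}=q_s^{*}\circ\overline{f}_s^{*}$ and the fact that $\overline{f'}^{*}$ commutes with cup products and with $\psi_*$, to the observation from \S\ref{torsorrecall} that $\overline{f}_s^{*}[\overline{T}_s]=0$ because a torsor is trivialized by pullback along its own structure map. Your explicit introduction of the projections $q_s$ on the torsor level is only a notational clarification of the same diagram chase.
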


\begin{proof}
Recall that $\epsilon_{s<t}$ is defined to be $\epsilon_{s<t}(\psi)=\psi_*(p^*_s[\overline{T}\nosp_s]\cup p^*_t[\overline{T}\nosp_t])$ for any $\psi\in\Hom(\overline{S}\nosp_s\otimes\overline{S}\nosp_t,\mu_n)$. We are going to pull back the cup product to $\overline{T'}$.

The right square of the diagram below commutes by functoriality of cohomologies
$$\xymatrix{
\H^1(\overline{X}\nosp_s,\overline{S}\nosp_s)\times\H^1(\overline{X}\nosp_t,\overline{S}\nosp_t)\ar[rr]^-{\cup\circ(p^*_s,p^*_t)}\ar[d]^{(\overline{f}^*_s,\overline{f}^*_t)}&&\H^2(\overline{X'},\overline{S}\nosp_s\otimes\overline{S}\nosp_t)\ar[r]^-{\psi_*}\ar[d]^{\overline{f'}^*}&\H^2(\overline{X'})\ar[d]^{\overline{f'}^*}\\
\H^1(\overline{T}\nosp_s,\overline{S}\nosp_s)\times\H^1(\overline{T}\nosp_t,\overline{S}\nosp_t)\ar[rr]^-{\cup\circ(p^*_s,p^*_t)}&&\H^2(\overline{T'},\overline{S}\nosp_s\otimes\overline{S}\nosp_t)\ar[r]^-{\psi_*}&\H^2(\overline{T'}),
}$$
and the commutativity of the left square follows from the fact that $\overline{f'}^*$ commutes with cup products and $\overline{f}\nosp_s\circ p_s=p_s\circ\overline{f'}$ and $\overline{f}\nosp_t\circ p_t=p_t\circ\overline{f'}$.

Observe that the class $[\overline{T}\nosp_s]$ vanishes when being pulled back by $\overline{f}\nosp_s$ to $\H^1(\overline{T}\nosp_s,\overline{S}\nosp_s)$ (and the same for $t$), cf. \S \ref{torsorrecall}. Then we find that $\overline{f'}^*\circ\epsilon_{s<t}=0$.
\end{proof}

\subsection{Comparison of diverse cohomology groups}\label{comparisonsubsection}

Let $X$ be a geometrically connected $L$-variety and $X'=\R_{L/K}X$ be its Weil restriction. Consider the natural homomorphism between relevant terms of the spectral sequence (\ref{ss}) applied to $X$ and $X'$
$$\phi_{i,j}:\H^i(L,\H^j(\overline{X}))\to\H^i(K,\H^j(\overline{X'}))$$

\begin{lem}\label{lem1}
When $j=1$, then for any $i$,
$$\phi_{i,1}:\H^i(L,\H^1(\overline{X}))\to\H^i(K,\H^1(\overline{X'}))$$
is an isomorphism of abelian groups.
\end{lem}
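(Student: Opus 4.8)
The plan is to recognize $\phi_{i,1}$ as an incarnation of Shapiro's lemma. By Proposition \ref{inducedmodule}(1) the map $\sum_s p_s^*$ identifies $\H^1(\overline{X'})$, as a $\Gamma_K$-module, with $\bigoplus_{s=1}^d \H^1(\overline{X}_s)=\Ind_{\Gamma_L}^{\Gamma_K}\H^1(\overline{X})$; write $M=\H^1(\overline{X})$ for short. Shapiro's lemma then supplies, for every $i$, a canonical isomorphism $\H^i(K,\Ind_{\Gamma_L}^{\Gamma_K}M)\cong\H^i(L,M)$, so the source and target of $\phi_{i,1}$ are abstractly isomorphic. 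The actual content is to verify that $\phi_{i,1}$ \emph{is} this Shapiro isomorphism, not merely that some isomorphism exists.

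To do this I would first unwind the definition of $\phi_{i,1}$ from \S\ref{notationsection}. The counit morphism $X'_L\to X$ attached to $\textup{id}_{X'}$ is an $L$-morphism, so by functoriality of the Leray spectral sequence over $L$ it induces on $\E_2$-terms the map $\H^i(L,M)\to\H^i(L,\H^1(\overline{X'}))$ obtained by applying $\H^i(L,-)$ to the $\Gamma_L$-equivariant pullback $\iota\colon\H^1(\overline{X})\to\H^1(\overline{X'})$ (here $\overline{X'_L}=\overline{X'}$); composing with $\textup{Cores}_{L/K}$ gives $\phi_{i,1}$. Thus $\phi_{i,1}=\textup{Cores}_{L/K}\circ\,\iota_*$. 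The crux is to identify $\iota$ with the canonical inclusion $M\hookrightarrow\Ind_{\Gamma_L}^{\Gamma_K}M$ into the identity-coset summand: after base change to $\overline{K}=\overline{L}$ the counit $X'_L\to X$ becomes the projection $p_1\colon\overline{X'}=\prod_s\overline{X}_s\to\overline{X}_1=\overline{X}$ onto the component indexed by $\gamma_1=\Gamma_L$, so $\iota=p_1^*$, which under Proposition \ref{inducedmodule}(1) is exactly the inclusion of the $s=1$ summand.

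With this identification in hand, $\phi_{i,1}=\textup{Cores}_{L/K}\circ\,\iota_*$ is precisely the corestriction form of Shapiro's lemma (valid since $[\Gamma_K:\Gamma_L]=d<\infty$, so the induced and coinduced modules agree), hence an isomorphism for every $i$, which is the assertion of the lemma. I expect the main obstacle to lie in the bookkeeping of the previous paragraph: checking carefully that the base change of the Weil-restriction counit is the projection $p_1$, that this projection corresponds to the identity coset $\gamma_1=\Gamma_L$ in the induced-module description, and that the resulting composite $\textup{Cores}_{L/K}\circ\,\iota_*$ is the standard Shapiro isomorphism rather than a twist of it. Once these compatibilities are pinned down, the rest is formal.
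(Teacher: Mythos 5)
Your proof is correct and follows exactly the route the paper takes: Lemma \ref{lem1} in the paper is proved by citing Shapiro's lemma together with the identification $\H^1(\overline{X'})\simeq\bigoplus_{s}\H^1(\overline{X}_s)=\Ind_{\Gamma_L}^{\Gamma_K}\H^1(\overline{X})$ from Proposition \ref{inducedmodule}(1). The only difference is that you spell out the (correct) verification that $\phi_{i,1}=\textup{Cores}_{L/K}\circ\,\iota_*$ with $\iota=p_1^*$ the inclusion of the identity-coset summand really is the corestriction form of the Shapiro isomorphism, a compatibility the paper leaves implicit in the phrase ``it follows immediately.''
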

\begin{proof}
It follows immediately from Shapiro's lemma \cite[\S I.5.8.b)]{SerreGaloisCoh} and the fact that $\H^1(\overline{X'})\simeq\bigoplus_{s=1}^d\H^1(\overline{X}\nosp_s)$ is  induced module $\Ind_{\Gamma_L}^{\Gamma_K}\H^i(\overline{X})$ by Proposition \ref{inducedmodule}(1).
\end{proof}

\begin{lem}\label{lem2}
Let $X$ be a geometrically connected $L$-variety and $X'=\R_{L/K}X$ be its Weil restriction. Suppose that $X'(\A_K)\neq\varnothing$
Then as subgroups of $\H^2(X')$ we have an equality
$$\phi(\H^2(X))+\im(\H^2(K))=\partial^{-1}_{X'}(\phi(\H^2(\overline{X})^{\Gamma_L})),$$
where $\partial_{X'}:\H^2(X')\to\H^2(\overline{X'})^{\Gamma_K}$ is the homomorphism given by the relevant spectral sequence.
\end{lem}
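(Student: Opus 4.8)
The plan is to use that $\phi$ is a morphism of the Leray spectral sequences (\ref{ss}) for $X$ and $X'$, so that it commutes with every edge map and every differential, together with the structural facts already available: $\partial_{X'}$ has kernel $\H_1^2(X')$ and image $\ker(\E_2^{0,2}(X')\to\E_2^{2,1}(X'))$ by Corollary \ref{sscor}(2), while Lemma \ref{lem1} supplies isomorphisms $\phi_{i,1}\colon\H^i(L,\H^1(\overline{X}))\to\H^i(K,\H^1(\overline{X'}))$ for all $i$. Since $\Phi$ identifies $X(\A_L)$ with $X'(\A_K)$, the hypothesis $X'(\A_K)\neq\varnothing$ gives $X(\A_L)\neq\varnothing$, so Corollary \ref{sscor} applies to both varieties. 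First I would dispatch the inclusion $\subseteq$. For $\alpha\in\H^2(X)$, compatibility of $\phi$ with the edge maps gives $\partial_{X'}(\phi(\alpha))=\phi(\partial_X(\alpha))\in\phi(\H^2(\overline{X})^{\Gamma_L})$, so $\phi(\H^2(X))$ lands in the right-hand side; and classes from $\H^2(K)$ die under restriction to $\overline{X'}$ (the structure map factors through $\Spec K$ and $\H^2(\overline{K})=0$), so $\im(\H^2(K))\subseteq\ker\partial_{X'}$, which is trivially contained in $\partial^{-1}_{X'}(\phi(\H^2(\overline{X})^{\Gamma_L}))$.

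For the reverse inclusion, take $\beta\in\H^2(X')$ with $\partial_{X'}(\beta)=\phi(\xi)$ for some $\xi\in\H^2(\overline{X})^{\Gamma_L}$. Because $\partial_{X'}(\beta)$ lies in $\im\partial_{X'}=\ker\bigl(\E_2^{0,2}(X')\xrightarrow{d_2}\E_2^{2,1}(X')\bigr)$, compatibility of $\phi$ with $d_2$ yields $\phi_{2,1}(d_2\xi)=d_2(\phi\xi)=0$; since $\phi_{2,1}$ is injective by Lemma \ref{lem1}, we get $d_2\xi=0$, hence $\xi\in\ker d_2=\im\partial_X$ by Corollary \ref{sscor}(2) for $X$. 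Picking $\alpha\in\H^2(X)$ with $\partial_X(\alpha)=\xi$ gives $\partial_{X'}(\phi(\alpha))=\phi(\xi)=\partial_{X'}(\beta)$, so $\beta-\phi(\alpha)\in\ker\partial_{X'}=\H_1^2(X')$. This reduces the remaining inclusion to the statement $\H_1^2(X')\subseteq\phi(\H^2(X))+\im(\H^2(K))$.

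That last containment I would obtain by a diagram chase anchored on Corollary \ref{sscor}(1), which provides the exact sequences $\H^2(L)\to\H_1^2(X)\to\H^1(L,\H^1(\overline{X}))\to0$ and $\H^2(K)\to\H_1^2(X')\to\H^1(K,\H^1(\overline{X'}))\to0$, whose left-hand images are $\im(\H^2(L))$ and $\im(\H^2(K))$. Since $\phi$ carries $\H_1^2(X)$ into $\H_1^2(X')$ and intertwines the edge maps onto $\E_2^{1,1}$ with the isomorphism $\phi_{1,1}$, any $\eta\in\H_1^2(X')$ can be corrected: lift its image $\bar\eta\in\H^1(K,\H^1(\overline{X'}))$ along $\phi_{1,1}$ to $\bar\zeta\in\H^1(L,\H^1(\overline{X}))$, lift $\bar\zeta$ to $\zeta\in\H_1^2(X)$, and then $\eta-\phi(\zeta)$ maps to $0$ in $\H^1(K,\H^1(\overline{X'}))$, hence lies in $\im(\H^2(K))$. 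Thus $\eta\in\phi(\H^2(X))+\im(\H^2(K))$, which finishes the reverse inclusion and the proof.

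The step I expect to be the main obstacle is making the spectral-sequence naturality fully rigorous: one must confirm that the ``Weil restriction followed by corestriction'' map $\phi$ of \S\ref{notationsection} genuinely induces a morphism of the Leray spectral sequences (\ref{ss}), so that it commutes with the edge maps $\partial_X,\partial_{X'}$ and with the differentials $d_2$, thereby allowing the descriptions of $\im\partial$ and $\ker d_2$ from Corollary \ref{sscor} to be transported across $\phi$. Once this is granted, everything rests on the isomorphisms $\phi_{i,1}$ of Lemma \ref{lem1}; the genuinely delicate input is the injectivity of $\phi_{2,1}$, which is precisely what forces $\xi$ into $\im\partial_X$ and thus produces the class $\alpha\in\H^2(X)$ needed in the reduction.
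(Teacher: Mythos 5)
Your proposal is correct and follows essentially the same route as the paper: both arguments rest on the two exact sequences of Corollary \ref{sscor} applied to $X$ and $X'$, the isomorphisms $\phi_{1,1}$ and $\phi_{2,1}$ of Lemma \ref{lem1}, and a diagram chase through the naturality of the Leray spectral sequence under $\phi$; you merely spell out the chase (and the reduction to $\H_1^2(X')=\phi(\H_1^2(X))+\im(\H^2(K))$) that the paper leaves implicit. The naturality point you flag as the main obstacle is exactly what the paper asserts without proof in \S\ref{notationsection}, so no gap is introduced.
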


\begin{proof}
By hypothesis, both $X$ and $X'$ are geometrically connected and both $X(\A_L)$ and $X'(\A_K)$ are non-empty, we have the following commutative diagram with exact rows by Corollary \ref{sscor}(1)
$$\xymatrix{
\H^2(L)\ar[r]\ar[d]^\phi&\H_1^2(X)\ar[r]\ar[d]^\phi&\H^1(L,\H^1(\overline{X}))\ar[r]\ar[d]_\simeq^{\phi_{1,1}}&0\\
\H^2(K)\ar[r]&\H_1^2(X')\ar[r]&\H^1(K,\H^1(\overline{X'}))\ar[r]&0
}$$
where the vertical homomorphisms are naturally induced by Weil restriction, cf. \S \ref{notationsection}.
Here the vertical homomorphism $\phi_{1,1}$ is an isomorphism by Lemma \ref{lem1}. It follows that
\begin{equation}\label{H_1}
\H_1^2(X')=\phi(\H_1^2(X))+\im(\H^2(K)).
\end{equation}

Consider the following commutative diagram with exact rows by Corollary \ref{sscor}(2)
$$\xymatrix{
0\ar[r]&\H_1^2(X)\ar[r]\ar[d]^\phi&\H^2(X)\ar[r]^{\partial_X}\ar[d]^\phi&\H^2(\overline{X})^{\Gamma_L}\ar[r]\ar[d]^\phi&\H^2(L,\H^1(\overline{X}))\ar[d]^{\phi_{2,1}}_\simeq\\
0\ar[r]&\H_1^2(X')\ar[r]&\H^2(X')\ar[r]^{\partial_{X'}}&\H^2(\overline{X'})^{\Gamma_K}\ar[r]&\H^2(K,\H^1(\overline{X'}))
}$$
where the vertical homomorphism $\phi_{2,1}$ is an isomorphism by Lemma \ref{lem1}. Then the desired equality follows from diagram chasing and (\ref{H_1}).
\end{proof}

\begin{prop}\label{finalpropforunfair}
With the notation in \S \ref{universaltorsorsection}, for every class $[\sigma]\in\H^1(L,S)$
such that ${_\sigma\nosp T}'(\A_K)\neq\varnothing$
we have the  inclusion
\begin{equation}\label{finalinclusionforunfair}
{_\sigma\nosp f}'^*(\H^2(X'))\subset\phi(\H^2(_\sigma\nosp T))+\im(\H^2(K))
\end{equation}
as subgroups of $\H^2({_\sigma\nosp T}')$.
\end{prop}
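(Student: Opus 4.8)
The plan is to reduce the statement to a computation on geometric cohomology by applying Lemma~\ref{lem2} to the universal torsor ${_\sigma\nosp T}$ in place of $X$ (and to ${_\sigma\nosp T}'$ in place of $X'$). Since ${_\sigma\nosp T}$ is again a smooth geometrically connected universal torsor of $n$-torsion with Weil restriction ${_\sigma\nosp T}'=\R_{L/K}({_\sigma\nosp T})$, and since ${_\sigma\nosp T}'(\A_K)\neq\varnothing$ by hypothesis (so that ${_\sigma\nosp T}(\A_L)\neq\varnothing$ as well via $\Phi$), Lemma~\ref{lem2} yields
$$\phi(\H^2({_\sigma\nosp T}))+\im(\H^2(K))=\partial^{-1}_{{_\sigma\nosp T}'}\bigl(\phi(\H^2(\overline{{_\sigma\nosp T}})^{\Gamma_L})\bigr).$$
Hence for a given $\alpha\in\H^2(X')$ it suffices to prove that $\partial_{{_\sigma\nosp T}'}({_\sigma\nosp f}'^*(\alpha))$ lies in $\phi(\H^2(\overline{{_\sigma\nosp T}})^{\Gamma_L})$. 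By functoriality of the edge map of the Leray spectral sequence (\ref{ss}) with respect to the $K$-morphism ${_\sigma\nosp f}':{_\sigma\nosp T}'\to X'$, one has $\partial_{{_\sigma\nosp T}'}\circ{_\sigma\nosp f}'^*=\overline{{_\sigma\nosp f}'}^*\circ\partial_{X'}$, so the whole claim reduces to the inclusion $\overline{{_\sigma\nosp f}'}^*\bigl(\H^2(\overline{X'})^{\Gamma_K}\bigr)\subset\phi(\H^2(\overline{{_\sigma\nosp T}})^{\Gamma_L})$ inside $\H^2(\overline{{_\sigma\nosp T}'})^{\Gamma_K}$.

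Next I would run the class $\partial_{X'}(\alpha)$ through the decomposition of Proposition~\ref{inducedmodule}(2) for the universal torsor ${_\sigma\nosp T}$, namely $\H^2(\overline{X'})=\bigl(\bigoplus_s\H^2(\overline{X}\nosp_s)\bigr)\oplus\im(\epsilon)$; since $S$ is commutative its characters and its geometric torsor classes are unchanged by the twist, so this is the same decomposition as for $T$. Writing $\partial_{X'}(\alpha)=\beta+\epsilon(\theta)$ accordingly, Lemma~\ref{vanish} applied to ${_\sigma\nosp f}'$ kills the second summand, whence $\overline{{_\sigma\nosp f}'}^*(\partial_{X'}(\alpha))=\overline{{_\sigma\nosp f}'}^*(\beta)$. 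Since the first summand is the $\Gamma_K$-submodule $\Ind_{\Gamma_L}^{\Gamma_K}\H^2(\overline{X})$ and $\epsilon$ is $\Gamma_K$-equivariant (being built from the Galois-permuted torsor classes), the splitting is $\Gamma_K$-stable, and the invariance of $\partial_{X'}(\alpha)$ forces $\beta\in\bigl(\bigoplus_s\H^2(\overline{X}\nosp_s)\bigr)^{\Gamma_K}$. Under the canonical isomorphism $\bigl(\Ind_{\Gamma_L}^{\Gamma_K}\H^2(\overline{X})\bigr)^{\Gamma_K}\simeq\H^2(\overline{X})^{\Gamma_L}$, such a $\beta$ is determined by its first component $\beta_1\in\H^2(\overline{X})^{\Gamma_L}$, with $\beta=\sum_s p^*_s(\gamma^*_s\beta_1)$.

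It then remains to identify this with an element in the image of $\phi$. The relevant $\phi$ on the $\E_2^{0,2}$-terms is, by its definition in \S\ref{notationsection} (pullback along $\overline{{_\sigma\nosp T}'_L}\to\overline{{_\sigma\nosp T}}$, i.e. $p^*_1$, followed by corestriction $\textup{Cores}_{L/K}$), exactly the trace map $\H^2(\overline{{_\sigma\nosp T}})^{\Gamma_L}\to\bigl(\Ind_{\Gamma_L}^{\Gamma_K}\H^2(\overline{{_\sigma\nosp T}})\bigr)^{\Gamma_K}$ sending $\xi\mapsto\sum_s p^*_s(\gamma^*_s\xi)$. Using the compatibilities $p_s\circ\overline{{_\sigma\nosp f}'}=\overline{{_\sigma\nosp f}}_s\circ p_s$ together with the conjugation relations $\overline{{_\sigma\nosp f}}_s\circ\gamma_s=\gamma_s\circ\overline{{_\sigma\nosp f}}$ from \S\ref{weilressubsection}, one computes
$$\overline{{_\sigma\nosp f}'}^*(\beta)=\sum_s p^*_s\bigl(\overline{{_\sigma\nosp f}}_s^*(\gamma^*_s\beta_1)\bigr)=\sum_s p^*_s\bigl(\gamma^*_s(\overline{{_\sigma\nosp f}}^*\beta_1)\bigr)=\phi(\overline{{_\sigma\nosp f}}^*\beta_1),$$
and $\overline{{_\sigma\nosp f}}^*\beta_1\in\H^2(\overline{{_\sigma\nosp T}})^{\Gamma_L}$ because $\overline{{_\sigma\nosp f}}$ descends to the $L$-morphism ${_\sigma\nosp f}$. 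This places $\overline{{_\sigma\nosp f}'}^*(\partial_{X'}(\alpha))$ in $\phi(\H^2(\overline{{_\sigma\nosp T}})^{\Gamma_L})$ and closes the argument.

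I expect the main obstacle to be this last paragraph: pinning down that the geometric incarnation of $\phi$ (corestriction on the $\E_2^{0,2}$-term of the Leray spectral sequence) is literally the trace map into the induced module, and then verifying the Galois-conjugation identities $\overline{{_\sigma\nosp f}}_s\circ\gamma_s=\gamma_s\circ\overline{{_\sigma\nosp f}}$ precisely enough that the two sums match term by term. A secondary point needing an explicit remark is the $\Gamma_K$-equivariance of the splitting in Proposition~\ref{inducedmodule}(2), which is what guarantees that the ``diagonal'' component $\beta$ of the invariant class $\partial_{X'}(\alpha)$ is itself $\Gamma_K$-invariant.
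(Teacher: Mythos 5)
Your argument is correct and follows essentially the same route as the paper's: reduce via Lemma~\ref{lem2} and the spectral-sequence edge maps to the geometric inclusion $\overline{f'}^*\bigl(\H^2(\overline{X'})^{\Gamma_K}\bigr)\subset\overline{\phi}\bigl(\H^2(\overline{T})^{\Gamma_L}\bigr)$, and then combine Proposition~\ref{inducedmodule}(2), Lemma~\ref{vanish}, and the induced-module structure. The one divergence is that the paper takes $\Gamma_K$-invariants \emph{after} pushing forward to $\H^2(\overline{T'})$ — using only that the image of the injection $\bigoplus_s\H^2(\overline{T}\nosp_s)\hookrightarrow\H^2(\overline{T'})$ is a $\Gamma_K$-submodule and that this injection is $\Gamma_K$-equivariant — which sidesteps the extra claim you flag, namely that the splitting of $\H^2(\overline{X'})$ in Proposition~\ref{inducedmodule}(2) is $\Gamma_K$-stable (true, but stated there only as a decomposition of abelian groups, so your route would need that additional verification).
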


\begin{proof}
Consider the commutative diagram
$$\xymatrix{
\H^2(X')\ar[r]^{{_\sigma\nosp f}'^*}\ar[d]^{\partial_{X'}} & H^2({_\sigma\nosp T}')\ar[d]^{\partial_{{_\sigma\nosp T}'}} & \H^2({_\sigma\nosp T})\ar[d]^{\partial_{_\sigma\nosp T}}\ar[l]_{\phi}\\
\H^2(\overline{X'})^{\Gamma_K}\ar[r]^{_\sigma\nosp{\overline{f'}}^*}&\H^2(\overline{_\sigma\nosp T'})^{\Gamma_K}&\H^2(\overline{_\sigma\nosp T})^{\Gamma_L}\ar[l]_{\overline{\phi}}
}$$
where vertical homomorphisms $\partial$ are  given by  relevant spectral sequences.
As mentioned in \S \ref{universaltorsorsection}, the universal $X'$-torsor of $n$-torsion ${_\sigma\nosp T}'$ is the Weil restriction of $_\sigma\nosp T$, it is geometrically connected.
Apply Lemma \ref{lem2} to those torsors  ${_\sigma\nosp T}'$ such that ${_\sigma\nosp T}'(\A_K)\neq\varnothing$,  diagram chasing implies that the desired inclusion (\ref{finalinclusionforunfair}) is a consequence of
\begin{equation}\label{inclusionoverclosedfield}
{_\sigma\nosp{\overline{f'}}^*}(\H^2(\overline{X'})^{\Gamma_K})\subset\overline{\phi}(H^2(\overline{_\sigma\nosp T})^{\Gamma_L})
\end{equation}
as subgroups of $\H^2(\overline{_\sigma\nosp T'})$.

As this last inclusion that we are going to prove concerns only cohomologies of varieties over the algebraic closure $\overline{L}$, twists by $\sigma$ do not affect anymore. We omit all $\sigma$ from now until the end of the proof.

Consider the following commutative diagram of $\Gamma_K$-modules with horizontal injective homomorphisms by Proposition \ref{inducedmodule} applied to $\overline{X'}$ and $\overline{T'}$ (the latter  is also connected cf. \S \ref{universaltorsorsection}).
$$\xymatrix{
\displaystyle\bigoplus_{s=1}^d\H^2(\overline{X}\nosp_s)\ar[r]^-{\alpha_X}\ar[d]^{\oplus{\overline{f}\nosp_s}\nosp^*}& \H^2(\overline{X'})\ar[d]^{\overline{f'}^*} \\
\displaystyle\bigoplus_{s=1}^d\H^2(\overline{T}\nosp_s)\ar[r]^-{\alpha_T}&\H^2(\overline{T'})
}$$
We claim that
\begin{equation}\label{f-alpha}
\im(\overline{f'}^*)\subset\im(\alpha_T)
\end{equation}
In fact, by Lemma \ref{vanish} the complement $\epsilon(\bigoplus_{1\leq s<t\leq d}\Hom(\overline{S}\nosp_s\otimes\overline{S}\nosp_t,\mu_n))$ of $\alpha_X(\bigoplus_{s=1}^d\H^2(\overline{X}\nosp_s))$ in $\H^2(\overline{X'})$ maps to $0$ by $\overline{f'}^*$, cf. Proposition \ref{inducedmodule}(2) for the structure of $\H^2(\overline{X'})$.
Then the claim follows from the diagram above.

The inclusion (\ref{f-alpha}) implies that
\begin{equation*}
\begin{array}{ll}
\displaystyle\overline{f'}^*(\H^2(\overline{X'})^{\Gamma_K})&\displaystyle\subset \im(\alpha_T)\cap\H^2(\overline{T'})^{\Gamma_K}\\
&\\
&\displaystyle=(\im(\alpha_T))^{\Gamma_K}\\
&\\
&\displaystyle=\alpha_T((\oplus_s\H^2(\overline{T}\nosp_s))^{\Gamma_K})\\
&\\
&\displaystyle=\alpha_T(\H^2(\overline{T})^{\Gamma_L})\\
&\\
&\displaystyle=\overline{\phi}(\H^2(\overline{T})^{\Gamma_L})
\end{array}
\end{equation*}
where the equalities follow respectively from the fact that
\begin{itemize}
\item[1st:]  $\im(\alpha_T)$ is a $\Gamma_K$-submodule of $\H^2(\overline{T'})$,
\item[2nd:]  $\alpha_T$ is an  injective homomorphism of $\Gamma_K$-modules,
\item[3rd:]  $\oplus_s\H^2(\overline{T}\nosp_s)=\Ind_{\Gamma_L}^{\Gamma_K}\H^2(\overline{T})$ is the induced module,
\item[4th:]  the restriction of $\alpha_T$ to the first component $\H^2(\overline{T})\to\H^2(\overline{T'})$ is nothing but the homomorphism $\overline{\phi}$.
\end{itemize}
This is nothing but the inclusion (\ref{inclusionoverclosedfield}) (with $\sigma$ omitted).
\end{proof}

\subsection{Proof of the unfair comparison (\ref{unfaireq})}\label{proofofunfairsubsection}

We are ready to complete the proof of the inclusion (\ref{unfaireq}) or its equivalent variant
$$\Phi\left(X(\A_L)^{\fab,\Br}\right)\subset X'(\A_K)^{\H^2(X')}.\leqno{(\ref{unfaireq}')}$$

\begin{proof}
We find that
\begin{equation*}
\begin{array}{ll}
\displaystyle X(\A_L)^{\fab,\Br}&\displaystyle\subset X(\A_L)^{f,\Br}\\
&\\
    &\displaystyle\subset\bigcup_{[\sigma]\in\H^1(L,S)}{_\sigma\nosp f}({_\sigma\nosp T}(\A_L)^{\H^2(_\sigma\nosp T)})\\
&\\
    &\displaystyle=\bigcup_{[\sigma]\in\H^1(K,S')}{_\sigma\nosp f}'({_\sigma\nosp T}'(\A_K)^{\phi(\H^2(_\sigma\nosp T))})\\
&\\
    &\displaystyle\subset X'(\A_K)^{\H^2(X')}

\end{array}
\end{equation*}
where
\begin{itemize}
\item[-] the first two inclusions are clear by definition;
\item[-] the equality follows from the compatibility of the \BM pairing mentioned in \S \ref{BMobsrecall};
\item[-] the last inclusion is a consequence of the functoriality (with respect to $_\sigma\nosp f'$) of the \BM pairing and Proposition \ref{finalpropforunfair}.
\end{itemize}
\end{proof}

\section{Proof of the main result}\label{mainproofsection}
We make use of the intermediate comparison (Proposition \ref{unfairprop}) by passing to the ``universal cover'' to prove the difficult inclusion of our main result Theorem \ref{mainthm}.

\subsection{Further properties of finite torsors}
We recall some further properties developed by M. Stoll of torsors under finite algebraic groups.
We call such torsors \emph{finite torsors} for short. In his paper  \cite{Stoll07}, varieties are usually supposed projective, but the proofs in \S 5 of his paper on finite torsors also work  without the projectiveness assumption.

Let $V$ be a geometrically connected variety defined over a number field $k$. Recall that a morphism between two $V$-torsors $W_1\buildrel{H_1}\over\to V$ and $W_2\buildrel{H_2}\over\to V$ is a $k$-morphism of algebraic groups $H_1\to H_2$ and an equivariant (with respect to the group actions) morphism of $V$-schemes $W_1\to W_2$. In such a situation, we say for simplicity that $W_1$ \emph{maps to} $W_2$ as $V$-torsors, and it does be the case on the level of cohomology sets $\H^1(V,H_1)\to\H^1(V,H_2)$.

With a connectedness assumption, geometrical morphisms between finite torsors come from morphisms defined over the ground field.

\begin{prop}[{\cite[Lemma 5.6]{Stoll07}}]\label{morfinitetorsor}
Let $V$ be a geometrically connected $k$-variety. Let $W_1\buildrel{H_2}\over\to V$ and $W_2\buildrel{H_2}\over\to V$ be two finite $V$-torsors. Assume that $W_1$ is geometrically connected. If $\overline{W}_1\buildrel{\overline{H}_1}\over\to \overline{V}$ maps to $\overline{W}_2\buildrel{\overline{H}_2}\over\to \overline{V}$ as $\overline{V}$-torsors, then there exists a class $[\tau]\in \H^1(k,H_2)$ such that $W_1\buildrel{H_2}\over\to V$ maps to the twist ${_\tau\nosp W}_2\buildrel{{_\tau\nosp H}_2}\over\to V$ as $V$-torsors.
\end{prop}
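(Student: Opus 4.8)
The plan is to measure the failure of the given geometric torsor morphism to be Galois-equivariant, to recognise this failure as a $1$-cocycle valued in $H_2(\overline{k})$, and then to absorb that cocycle by twisting the target $W_2$. Concretely, write $\overline{\lambda}\colon\overline{H}_1\to\overline{H}_2$ for the group homomorphism and $\overline{f}\colon\overline{W}_1\to\overline{W}_2$ for the equivariant $\overline{V}$-morphism underlying the geometric morphism of torsors. Since $W_1,W_2,V,H_1,H_2$ are all defined over $k$, the group $\Gamma_k$ acts on geometric points and conjugates $\overline{f}$ and $\overline{\lambda}$; for $g\in\Gamma_k$ the conjugate ${}^{g}\overline{f}$ is again a torsor morphism, now lying over the conjugate homomorphism ${}^{g}\overline{\lambda}$. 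As $\overline{f}$ and ${}^{g}\overline{f}$ are two $\overline{V}$-morphisms $\overline{W}_1\to\overline{W}_2$, they differ fibrewise by a section of the torsor: composing $w\mapsto(\overline{f}(w),{}^{g}\overline{f}(w))$ with the difference isomorphism $\overline{W}_2\times_{\overline{V}}\overline{W}_2\xrightarrow{\sim}\overline{W}_2\times\overline{H}_2$ produces a morphism of $\overline{k}$-schemes $c_g\colon\overline{W}_1\to\overline{H}_2$ with ${}^{g}\overline{f}(w)=\overline{f}(w)\cdot c_g(w)$.

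The crucial point, and the place where both hypotheses are used, is that $c_g$ is constant. Because $k$ is a number field, $H_2$ is a finite étale $k$-group, so $\overline{H}_2$ is a finite disjoint union of copies of $\Spec(\overline{k})$; as $\overline{W}_1$ is connected (this is precisely the geometric connectedness of $W_1$), the morphism $c_g$ is constant, equal to an element $c_g\in H_2(\overline{k})$. Feeding this constant back into the equivariance of $\overline{f}$ and ${}^{g}\overline{f}$ over $\overline{\lambda}$ and ${}^{g}\overline{\lambda}$ yields the relation ${}^{g}\overline{\lambda}(h)=c_g^{-1}\overline{\lambda}(h)c_g$, so the two homomorphisms differ by conjugation by $c_g$. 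Comparing ${}^{gh}\overline{f}$ computed directly and as ${}^{g}({}^{h}\overline{f})$, and using that the $H_2$-action on $W_2$ is defined over $k$ hence Galois-equivariant, gives the cocycle identity $c_{gh}=c_g\cdot{}^{g}c_h$. Since all the data descend to a finite extension of $k$, the map $g\mapsto c_g$ is continuous, so it defines a class $[\tau]\in\H^1(k,H_2)$.

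Finally I would read these two relations as descent data for the twist. Twisting $W_2$ by $[\tau]$ replaces the Galois action on $\overline{W}_2$ by $p\mapsto{}^{g}p\cdot c_g^{-1}$ and the action on $\overline{H}_2$ by the inner twist $h\mapsto c_g\,({}^{g}h)\,c_g^{-1}$, producing exactly the twisted torsor ${_\tau\nosp W}_2\buildrel{{_\tau\nosp H}_2}\over\to V$ of \S\ref{torsorrecall}; the cocycle identity for $c$ is precisely what makes these modified actions semilinear. With respect to these twisted actions, the relations ${}^{g}\overline{f}(w)=\overline{f}(w)c_g$ and ${}^{g}\overline{\lambda}(h)=c_g^{-1}\overline{\lambda}(h)c_g$ say exactly that $\overline{f}$ and $\overline{\lambda}$ are Galois-equivariant, so by Galois descent they come from a $k$-homomorphism $H_1\to{_\tau\nosp H}_2$ and a $k$-morphism $W_1\to{_\tau\nosp W}_2$ commuting with the projections to $V$, which is the asserted morphism of $V$-torsors. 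I expect the main obstacle to be the constancy step together with the bookkeeping that confirms $\overline{f}$ and $\overline{\lambda}$ descend against one and the same cocycle; once the side and inverse conventions of the twist are pinned down, the single identity $c_{gh}=c_g\cdot{}^{g}c_h$ makes everything fit.
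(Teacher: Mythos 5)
The paper offers no proof of this statement at all: it is quoted directly as Lemma 5.6 of Stoll's paper, so there is no internal argument to compare against. Your proof is correct and is essentially Stoll's own: measuring the discrepancy between $\overline{f}$ and its Galois conjugates by a constant $c_g\in H_2(\overline{k})$ (constancy being exactly where the geometric connectedness of $W_1$ and the finiteness, hence discreteness, of $\overline{H}_2$ enter), checking the cocycle identity $c_{gh}=c_g\cdot{}^{g}c_h$ and the conjugation relation for $\overline{\lambda}$, and reading these as descent data for the torsor twisted by $[\tau]=[c]$ is precisely the standard argument, and your verification that $\overline{f}$ and $\overline{\lambda}$ become equivariant for the twisted actions is carried out with the correct sign and side conventions.
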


The notion of a cofinal family of coverings, which is in some sense the universal cover, is defined by Stoll as follows.

\begin{defn}
A family of finite $V$-torsors $(W_i\buildrel{H_i}\over\to V)_{i\in I}$ with $W_i$ geometrically connected over $k$ is called a \emph{cofinal family of coverings} of $V$ if for every $\overline{V}$-torsor $\overline{W}\buildrel{\overline{H}}\over\to\overline{V}$ over $\overline{k}$ with $\overline{W}$ connected (not necessarily defined over $k$) there exists a torsor $W_i\buildrel{H_i}\over\to V$ in the family such that it maps to $\overline{W}$ after being base extended to $\overline{k}$.
\end{defn}

Then Stoll obtained a sufficient condition for the existence of such a family.

\begin{lem}[{\cite[Lemma 5.7]{Stoll07}}]\label{cofinal}
Let $V$ be a geometrically connected $k$-variety. If $V(\A_k)^\et\neq\varnothing$, then there exists a cofinal family of coverings of $V$.
\end{lem}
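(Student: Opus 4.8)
The plan is to prove Lemma \ref{cofinal}, namely that a geometrically connected $k$-variety $V$ with $V(\A_k)^\et\neq\varnothing$ admits a cofinal family of coverings. The natural strategy is to produce, for each positive integer $n$, a single canonical finite torsor that dominates all connected geometric torsors under groups annihilated by a bounded bound related to $n$, and then let the family range over these as $n$ grows. The starting observation is that the assumption $V(\A_k)^\et\neq\varnothing$ guarantees in particular $V(\A_k)^{\fab,\Br}\neq\varnothing$ (and a fortiori $V(\A_k)^\Br\neq\varnothing$), which is exactly the nonemptiness hypothesis that permits the existence of torsors of a prescribed extended type, as recalled via \cite[Corollary 3.6]{HSk13} and the theory of universal torsors of $n$-torsion in \S\ref{universaltorsorsection}.

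First I would recall that a connected $\overline{V}$-torsor $\overline{W}\buildrel{\overline{H}}\over\to\overline{V}$ under a \emph{finite} group factors, after passing to the abelianization and to $n$-torsion for $n=\#\overline{H}$, through the maximal abelian quotient; more precisely the key point is that any connected finite geometric torsor is dominated by one whose structure group is finite abelian of exponent dividing $n$, and such abelian torsors are classified by $\H^1(\overline{V},H)$ for $H$ of $n$-torsion. The universal torsor of $n$-torsion $h_n:W_n\buildrel{H_n}\over\to V$ constructed in \S\ref{universaltorsorsection} has the defining property that its extended type induces an isomorphism $H_n^*\xrightarrow{\simeq}\H^1(\overline{V})$, which means precisely that over $\overline{k}$ it dominates every connected abelian $n$-torsion $\overline{V}$-torsor. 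Crucially, the nonemptiness $V(\A_k)^\Br\neq\varnothing$ is what lets us descend this universal object to a torsor genuinely defined over $k$ (not merely over $\overline{k}$), and $W_n$ is geometrically connected by the remark in \S\ref{universaltorsorsection}.

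The next step is to pass from abelian torsors to arbitrary finite connected ones. Given a connected $\overline{W}\buildrel{\overline{H}}\over\to\overline{V}$, I would filter $\overline{H}$ by a composition series and argue inductively on the order of $\overline{H}$ that $\overline{W}$ is dominated by an iterated torsor each of whose layers is abelian $n$-torsion; applying the universal torsor of $n$-torsion to $V$, then to $W_n$, and so on, produces a tower of geometrically connected $k$-torsors whose base change to $\overline{k}$ maps onto $\overline{W}$. Taking $(W_i)_{i\in I}$ to be all finite iterated towers built from the universal torsors of $n$-torsion over all $n$ gives a cofinal family, and geometric connectedness is preserved at each stage precisely because each layer is a universal torsor of $n$-torsion, which is geometrically connected.

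The main obstacle I expect is the descent step: producing the torsors \emph{over $k$} rather than only over $\overline{k}$, and ensuring geometric connectedness survives the iteration. This is where the hypothesis $V(\A_k)^\et\neq\varnothing$ (hence $V(\A_k)^\Br\neq\varnothing$) is indispensable, since without an adelic point the extended type need not be realized by a torsor defined over the ground field. The dominance of an arbitrary finite connected torsor by an abelian tower, and the compatibility of Proposition \ref{morfinitetorsor} (which upgrades geometric morphisms of connected finite torsors to morphisms over $k$ after a twist) are the technical devices that close the induction; the twist $[\tau]\in\H^1(k,H)$ supplied there is exactly what reconciles the geometric factorization with a factorization defined over $k$, and handling these twists coherently across the tower is the delicate bookkeeping at the heart of the argument.
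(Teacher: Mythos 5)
There is a fatal gap at the heart of your construction: the claim that every connected finite geometric torsor $\overline{W}\buildrel{\overline{H}}\over\to\overline{V}$ is dominated by an iterated tower of abelian torsors of $n$-torsion. This is false whenever $\overline{H}$ is not solvable. Indeed, if a connected covering corresponding to a (normal) subgroup $M$ of $\pi=\pi_1(\overline{V})$ receives a map from an $m$-step abelian tower, i.e.\ from a covering corresponding to a chain $N_m\leq\cdots\leq N_0=\pi$ with each $N_{i+1}$ normal in $N_i$ and $N_i/N_{i+1}$ abelian, then the derived subgroups satisfy $\pi^{(i)}\subset N_i$, hence $\pi^{(m)}\subset N_m\subset M$, and so $\overline{H}\simeq\pi/M$ is a quotient of the solvable group $\pi/\pi^{(m)}$. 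Thus no family built from iterated universal torsors of $n$-torsion can dominate a connected torsor under, say, $A_5$, and such torsors exist as soon as $\pi_1(\overline{V})$ has a non-solvable finite quotient (any hyperbolic curve already provides one). Your appeal to a composition series does not help: the composition factors of a non-solvable group are non-abelian simple, not abelian. The universal torsors of \S\ref{universaltorsorsection} only see $\H^1(\overline{V},\mu_n)$, i.e.\ the abelianization of $\pi_1(\overline{V})$ modulo $n$, and no iteration of them escapes the pro-solvable quotient of the fundamental group.

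Note also that the paper does not reprove this statement; it is quoted from Stoll \cite[Lemma 5.7]{Stoll07}, whose mechanism is entirely different: one uses that $\pi_1(\overline{V})$ is topologically finitely generated, so that the characteristic open subgroups form a cofinal system among all open subgroups; the corresponding geometric coverings are Galois-stable, and the hypothesis $V(\A_k)^{\et}\neq\varnothing$ (an adelic point surviving all finite descent), together with what is Proposition \ref{morfinitetorsor} here, is what rigidifies the descent data and produces genuine finite $k$-torsors with geometrically connected total space. That route handles arbitrary finite quotients of $\pi_1(\overline{V})$, not only solvable ones. A secondary problem with your plan, even restricted to the solvable part, is the iteration of the existence statement: \cite[Corollary 3.6]{HSk13} requires the Brauer--Manin set of the base of each new layer to be nonempty, whereas $V(\A_k)^{\et}\neq\varnothing$ only guarantees that some twist of each layer carries an adelic point, not one surviving the Brauer--Manin obstruction; propagating the needed nonemptiness up the tower would require a separate argument.
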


\subsection{Proof of the main result}

\begin{proof}[Proof of Theorem \ref{mainthm}]
It remains to prove the difficult inclusion
\begin{equation}\label{hardhalf}
\Phi\left(X(\A_L)^{\et,\Br}\right)\subset X'(\A_K)^{\et,\Br}.
\end{equation}
We may assume that $X(\A_L)^{\et,\Br}\neq\varnothing$, a fortiori $X(\A_L)^\et\neq\varnothing$. There exists a cofinal family $(f_i:Y_i\buildrel{G_i}\over\to X)_{i\in I}$ of coverings of $X$ according to Lemma \ref{cofinal} applied to $X$ over $L$.

By passing to the prefixed algebraic closure of $L$, we see that the Weil restriction $(f'_i:Y'_i\buildrel{G'_i}\over\to X')_{i\in I}$ is a cofinal family of coverings of $X'$, see the proof of \cite[Proposition 5.15]{Stoll07} for details.

The right hand side of (\ref{hardhalf}) is the intersection of
$$X'(\A_K)^{\eta,\Br}=\displaystyle\bigcup_{[\sigma]\in \H^1(k,D)}{_\sigma\nosp \eta}({\nosp_\sigma\nosp \Delta}(\A_K)^\Br)$$
for all finite $X'$-torsors $\eta:\Delta\buildrel{D}\over\to X'$ under $D$ over $K$.
It suffices to show  that
\begin{equation}\label{aim}
\Phi\left(X(\A_L)^{\et,\Br}\right)\subset X'(\A_K)^{\eta,\Br}.
\end{equation}

Take base change to the prefixed algebraic closure of $K$ and  consider the $\overline{X}'$-torsor $\overline{\Delta}_0\buildrel{\overline{D}_0}\over\to\overline{X}'$ given by the connected component $\overline{\Delta}_0$ of $\overline{\Delta}$ and its stabilizer $\overline{D}_0$ in $\overline{D}$. By definition, there exists an $i\in I$ such that  $\overline{Y}'_i\buildrel{\overline{G}_i}\over\to \overline{X}'$ maps to $\overline{\Delta}_0\buildrel{\overline{D}_0}\over\to\overline{X}'$ as $\overline{X}'$-torsors and hence it maps to $\overline{\Delta}\buildrel{\overline{D}}\over\to\overline{X}'$ as $\overline{X}'$-torsors. According to Proposition \ref{morfinitetorsor}, there exists a class $[\tau]\in\H^1(K,D)$ such that $f'_i:Y'_i\buildrel{G'_i}\over\to X'$ maps to the twist ${_\tau\nosp\eta}:{_\tau\nosp \Delta}\buildrel{{_\tau\nosp D}}\over\to X'$ as $X'$-torsors.
For each class $[\sigma]\in\H^1(K,G'_i)$, we denote by $[\tilde{\sigma}]\in\H^1(K,{_\tau\nosp D})$ its image under the map between cohomology sets induced by $G'_i\to{_\tau\nosp D}$. Then ${_\sigma\nosp f'_i}:{_\sigma\nosp Y'_i}\buildrel{{_\sigma\nosp G'_i}}\over\To X'$ maps to ${_{\tilde{\sigma}}\nosp({_\tau\nosp\eta})}:{_{\tilde{\sigma}}\nosp({_\tau\nosp\Delta})}\buildrel{{_{\tilde{\sigma}}\nosp({_\tau\nosp D})}}\over\To X'$ as $X'$-torsors, i.e. ${_\sigma\nosp f'_i}={_{\tilde{\sigma}}\nosp({_\tau\nosp\eta})}\circ g$ for a certain morphism of $X'$-torsors $g:{_\sigma\nosp Y'_i}\to{_{\tilde{\sigma}}\nosp({_\tau\nosp\Delta})}$ which depends on $\sigma$.
By functoriality of the \BM pairing, we obtain the following inclusion
$${_\sigma\nosp f'_i}({_\sigma\nosp Y'_i}(\A_K)^\Br)={_{\tilde{\sigma}}\nosp({_\tau\nosp\eta})}\circ g(Y'_i(\A_K)^\Br)\subset{_{\tilde{\sigma}}\nosp({_\tau\nosp\eta})}({_{\tilde{\sigma}}\nosp({_\tau\nosp\Delta})}(\A_K)^\Br).$$
Taking union over all classes $[\sigma]\in\H^1(K,G'_i)$, we obtain
\begin{equation}\label{cofinalcontrol}
\begin{array}{ll}
\displaystyle X'(\A_K)^{f'_i,\Br}&\displaystyle=\bigcup_{[\sigma]\in\H^1(K,G'_i)}{_\sigma\nosp f'_i}({_\sigma\nosp Y'_i}(\A_K)^\Br)\\
&\\
\displaystyle&\displaystyle\subset\bigcup_{[\sigma]\in\H^1(K,G'_i)}{_{\tilde{\sigma}}\nosp({_\tau\nosp\eta})}({_{\tilde{\sigma}}\nosp({_\tau\nosp\Delta})}(\A_K)^\Br)\\
&\\
\displaystyle&\displaystyle\subset\bigcup_{[\delta]\in\H^1(K,D)}{_\delta\nosp\eta}({_\delta\nosp\Delta})(\A_K)^\Br)\\
&\\
\displaystyle&\displaystyle= X'(\A_K)^{\eta,\Br}.
\end{array}
\end{equation}

According to a result of the first  author \cite[Th\'eor\`eme 1.1]{Cao2017}, we have
\begin{equation}\label{caoeq}
X(\A_L)^{\et,\Br}=\bigcup_{[\sigma]\in\H^1(L,G_i)}{_\sigma\nosp f_i}({_\sigma\nosp Y_i}(\A_L)^{\et,\Br}).
\end{equation}
Since $Y_i$ is an element in the cofinal family of coverings of $X'$, it is geometrically connected and so is ${_\sigma\nosp Y_i}$.
Proposition \ref{unfairprop} (the unfair comparison) applied to  ${_\sigma\nosp Y_i}$ and its Weil restriction ${_\sigma\nosp Y'_i}$ implies that
$$\Phi\left({_\sigma\nosp Y_i}(\A_L)^{\et,\Br}\right)\subset\Phi\left({_\sigma\nosp Y_i}(\A_L)^{\fab,\Br}\right)\subset{_\sigma\nosp Y'_i}(\A_K)^\Br.$$
Applying $\Phi$ to the equation (\ref{caoeq}) and taking union over all classes $[\sigma]\in\H^1(L,G_i)=\H^1(K,G'_i)$, it turns out that
\begin{equation}\label{finalstep}
\Phi\left(X(\A_L)^{\et,\Br}\right)\subset\bigcup_{[\sigma]\in\H^1(K,G'_i)}{_\sigma\nosp f'_i}({_\sigma\nosp Y'_i}(\A_K)^{\Br})=X'(\A_K)^{f'_i,\Br}.
\end{equation}
The inclusion (\ref{aim}) now follows from (\ref{cofinalcontrol}) and (\ref{finalstep}), which completes the proof.
\end{proof}

\bigskip

\begin{footnotesize}
\noindent\textbf{Acknowledgements.}
The first author is partially supported by CAS Project for Young Scientists in Basic Research (Grant No. YSBR-032). The second author is partially supported by
National Natural Science Foundation of China (Grant No.12071448).
Both authors are also partially supported by Innovation Program for Quantum Science and Technology (Grant No. 2021ZD0302904).
\end{footnotesize}

\bigskip


\bigskip

\bibliographystyle{alpha}
\bibliography{mybib1}

\end{document}